\documentclass[11pt]{amsart}

\usepackage[margin=1in]{geometry}

\usepackage{amsmath, amssymb, amsfonts, amsthm, mathtools}

\usepackage{graphicx}
\usepackage{xcolor}

\usepackage{tikz}
\usetikzlibrary{3d,calc}
\usepackage{tikz-cd}

\usepackage{algorithm}
\usepackage{algpseudocode}

\usepackage{enumitem}
\usepackage{parskip}

\usepackage[colorlinks=true,linkcolor=blue,citecolor=blue,urlcolor=blue]{hyperref}
\usepackage[capitalise,noabbrev]{cleveref}

\newtheorem{theorem}{Theorem}[section]
\newtheorem{lemma}[theorem]{Lemma}
\newtheorem{corollary}[theorem]{Corollary}

\newtheorem{claim}[theorem]{Claim}

\theoremstyle{definition}
\newtheorem{definition}[theorem]{Definition}

\newtheorem{example}[theorem]{Example}
\newtheorem{remark}[theorem]{Remark}

\DeclareMathOperator{\depol}{depol}
\DeclareMathOperator{\reg}{reg}
\DeclareMathOperator{\pd}{pd}
\DeclareMathOperator{\height}{ht}

\title{On licci squarefree monomial ideals}

\author{Om Prakash Bhardwaj}
\address{Chennai Mathematical Institute, Siruseri, Tamilnadu~603103. India}
\email{omprakash@cmi.ac.in, opbhardwaj95@gmail.com}

\author{Trung Chau}
\address{Chennai Mathematical Institute, Siruseri, Tamilnadu~603103. India}
\email{chauchitrung1996@gmail.com}

\author{Omkar Javadekar}
\address{Chennai Mathematical Institute, Siruseri, Tamilnadu~603103. India}
\email{omkarj@cmi.ac.in, omkarjavadekar@gmail.com}

\keywords{Monomial ideals, Path ideals, Licci ideals, Regularity, Cohen--Macaulay ring}

\subjclass[2020]{14M06, 13C40, 13H10, 05E40}

\begin{document}

\begin{abstract}
We study the licci property for several classes of squarefree monomial ideals arising from graphs and related combinatorial structures. We characterize licci bi-Cohen–Macaulay squarefree monomial ideals, complementary edge ideals, $t$-path ideals of cycles, and  $(t-1)$-suspensions of graphs. Consequently, the full list of licci path ideals of trees is obtained. This work extends the known classification of licci edge ideals to a broader family of path ideals.
\end{abstract}
\maketitle

\section{Introduction}
The notion of linkage was first formally introduced by Peskine--Szpiro in \cite{Peskine-Szpiro}. Given a commutative Noetherian ring $R$ and ideals $I, J\subseteq R$, we say that $I$ and $J$ are \emph{directly linked}, denoted $I\sim J$, if there is a regular sequence $\underline{x}=x_1, \ldots, x_c\in I\cap J$ such that $I=(\underline{x})\colon_R\, J$ and $J= (\underline{x}) \colon_R\, I$. In general, the ideals $I$ and $J$ are said to be \emph{linked} if there is a sequence of ideals $I=I_0, I_1, \ldots, I_r=J$ such that $I_i$ is directly linked to $I_{i+1}$ for every $i$. It is easy to see that linkage induces an equivalence relation on the set of all ideals of $R$ in which two ideals are in the same equivalence class if they are linked. We call the equivalence classes of this relation as \emph{linkage classes}. An ideal $I$ is said to be \emph{licci} if $I$ belongs to the linkage class of a complete intersection ideal. The theory of linkage is a useful tool in classifying ideals and algebraic varieties (see \cite{EneRinaldoTerai_liccibinomial,Herzog_Crelle, Huneke_AJM, Huneke_Inv, HunekeUlrich_Duke, Kimura-Terai-Yoshida, KustinMiller_Gorenstein, KustinMiller_tightGorenstein, KustinMillerUlrich_pureRes, Rao_liason}).

The classification of licci ideals arising from combinatorial objects such as graphs and simplicial complexes has been a topic of significant interest (see \cite{EneRinaldoTerai_liccibinomial, Kimura-Terai-Yoshida, NagelRomer_glicci}). In \cite{Kimura-Terai-Yoshida}, Kimura--Terai--Yoshida characterized the licci property among edge ideals of finite simple graphs, the most prominent class of squarefree monomial ideals studied in combinatorial commutative algebra.  A goal of this article is to extend the licci characterization to other well studied classes of squarefree monomial ideals. We refer to subsequent sections for unexplained concepts and terminology.

It is important to note that licci ideals are Cohen--Macaulay~(\cite{Peskine-Szpiro}), which serves as our starting point. A squarefree monomial ideal $I$ in a polynomial ring $R$ over a field is called \emph{bi-Cohen--Macaulay} if $R/I$ and $R/I^{\vee}$ are both Cohen--Macaulay. This property arises from the study of Bernstein-Gel'fand-Gel'fand correspondence and coherent sheaves \cite{biCM-Crupi,biCM-Gunnar,biCM-Herzog}. In our first main result, we fully characterize licci bi-Cohen--Macaulay squarefree monomial ideals.

\begin{theorem}[\protect{\Cref{thm:licci-biCM}}]\label{thm:licci-bi-CM-intro}
 Let $I$ be a bi-Cohen--Macaulay squarefree monomial ideal in a polynomial ring $S$ over a field. Let $\mathfrak{m}$ denote the homongeneous maximal ideal of $S$. Then $I_{\mathfrak{m}}$ is licci in $S_{\mathfrak{m}}$ if and only if $\operatorname{ht}(I) \leq 2$ or $I$ is generated by variables.     
\end{theorem}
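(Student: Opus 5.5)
The two directions are handled separately.

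For the sufficiency direction, first suppose $I$ is generated by variables. Then $I$ is a complete intersection, hence licci.

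If $\operatorname{ht}(I)\le 2$, note that bi-Cohen--Macaulayness gives that $S/I$ is Cohen--Macaulay. The case of height $1$ is easy: a Cohen--Macaulay (unmixed) squarefree monomial ideal of height one is principal, generated by the product of its variables, so it is a complete intersection. For height $2$, we invoke the classical result of Gaeta, in the local form due to Apéry--Gaeta and Peskine--Szpiro: every perfect ideal of codimension $2$ in a regular local ring is licci. Localizing at $\mathfrak{m}$ preserves perfection, so $I_{\mathfrak{m}}$ is licci.

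For the necessity direction, assume $I_{\mathfrak{m}}$ is licci, $\operatorname{ht}(I)=c\ge 3$, and aim to show that $I$ is generated by variables. The key input is the fact that bi-Cohen--Macaulay is equivalent (Eagon--Reiner) to $S/I$ being Cohen--Macaulay with a linear resolution after passing to the Alexander dual. More precisely, $I^{\vee}$ Cohen--Macaulay means $I$ has a linear resolution. So $I$ is generated in a single degree $d$ and has a linear, Cohen--Macaulay resolution of length $c$.

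We then use Huneke--Ulrich's obstruction for licci ideals. If $I$ is homogeneous licci with minimal free resolution $F_\bullet$ of length $c\ge 2$, then the maximal shift in $F_c$ satisfies $\max\deg F_c \ge (c-1)\cdot(\text{minimal degree of generators}) + \min\deg F_c$. Equivalently, $\reg(S/I)\ge (c-1)(\text{indeg}(I)-1)$ in their formulation, and this holds unless $I$ is a complete intersection. Passing from licci in $S_{\mathfrak m}$ to the graded statement is exactly the setting of Huneke--Ulrich ("The structure of linkage", Ann.\ Math.\ 1987, Cor.~5.13), which applies to homogeneous ideals licci locally at $\mathfrak{m}$.

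For a linear resolution, $F_c$ is concentrated in degree $d+c-1$, so both $\max$ and $\min$ equal $d+c-1$. The inequality becomes $d+c-1\ge (c-1)d + d+c-1$, i.e.\ $(c-1)d\le 0$. Checking the precise form, $\reg(S/I)=d-1$ and the bound requires $d-1\ge (c-1)(d-1)$, forcing $d=1$ when $c\ge 3$; the complete-intersection exception similarly forces $d=1$, since a complete intersection of height $c\ge 2$ with a linear resolution must be generated by linear forms. Hence $I$ is generated by linear monomials, i.e.\ by variables.

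The main obstacle is stating the Huneke--Ulrich inequality in the correct normalization, so that the linear case genuinely gives a contradiction for $d\ge 2$ and $c\ge 3$ (and fails for $c=2$, consistent with Gaeta). A second point to handle carefully is making sure the local licci hypothesis transfers to the graded criterion.
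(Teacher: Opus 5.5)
Your argument is correct and is essentially the paper's: both get $\reg(S/I)=\alpha(I)-1$ from the Cohen--Macaulayness of $I^\vee$ and feed it into Huneke--Ulrich's Corollary~5.13 in the form $\reg(S/I)\ge(c-1)(\alpha(I)-1)$, where $c=\height(I)$. You obtain the regularity from Eagon--Reiner's linear resolution, while the paper uses Terai's formula $\reg(S/I)=\pd(S/I^\vee)-1$ together with $\pd(S/I^\vee)=\height(I^\vee)=\alpha(I)$; your single-degree generation also makes the paper's final step ``$\alpha(I)=1$, hence $I$ is generated by variables'' explicit. Do delete your first formulation $\max\deg F_c\ge(c-1)\alpha(I)+\min\deg F_c$: it is false (it already fails for the complete intersection $(x,y)$) and is not equivalent to the regularity form, whereas the correct bound $\max\deg F_c>(c-1)\alpha(I)$ also holds for complete intersections, so no separate exception is needed.
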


Despite being a recent terminology,  complementary edge ideals have sparked great interests in the community of commutative algebraists with many algebraic and homological invariants computed and properties fully characterized \cite{ ficarra2026stablesetassociatedprimes,Ficarra-Moradi,hibi2025complementaryedgeideals}. Our next result provides a full characterization of licci complementary edge ideals, adding to the growing literature around this concept.

\begin{theorem}[\protect{\Cref{thm:licci-complementary}}]
 Let $G$ be a finite simple graph and $I_c(G)$ denote its complementary edge ideal. Set $S=\Bbbk[V(G)]$, where $\Bbbk$ is a field, and let $\mathfrak{m}$ denote the homongeneous maximal ideal of $S$. Then $I_c(G)_{\mathfrak{m}}$ is licci in $S_\mathfrak{m}$  if and only if $G$ is a forest or a triangle.     
\end{theorem}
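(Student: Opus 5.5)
The plan is to compute the minimal primes of $I_c(G)$ explicitly, use unmixedness (a necessary condition for Cohen--Macaulayness, hence for licci) to reduce to two cases, and then settle the height-$2$ case by Hilbert--Burch/Gaeta and the height-$3$ case by \Cref{thm:licci-bi-CM-intro}. Write $V=V(G)$ with $|V|=n$ and $x_T=\prod_{v\in T}x_v$, so that $I_c(G)=(x_{V\setminus e} : e\in E(G))$.

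\textbf{Step 1: minimal primes.} Since $I_c(G)$ is squarefree, it is the intersection of its minimal primes. A monomial prime $(x_v : v\in T)$ contains $x_{V\setminus e}$ exactly when $T\not\subseteq e$. Hence $T$ is a cover precisely when $T$ is contained in no edge. The minimal such $T$ are of three kinds:
\begin{itemize}
\item single vertices that lie on no edge (isolated vertices);
\item pairs $\{a,b\}$ with $ab\notin E(G)$ and neither $a$ nor $b$ isolated;
\item triangles of $G$ (three pairwise adjacent vertices, so that no proper subset is a cover).
\end{itemize}
Consequently $\operatorname{ht} I_c(G)\le 3$.

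\textbf{Step 2: reduction via unmixedness.} Isolated vertices only contribute a common monomial factor. Concretely, if $v$ is isolated then $I_c(G)=x_v\,I_c(G\setminus v)$ in the larger ring. In the degenerate situations one should check directly that this is compatible with the statement, and otherwise I will reduce to $G$ without isolated vertices; I expect this to be a short case check.

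Suppose $G$ has no isolated vertices. If $G$ has a triangle and also a non-edge, then $I_c(G)$ has minimal primes of heights $2$ and $3$. It is therefore not unmixed, hence not Cohen--Macaulay, hence not licci by Peskine--Szpiro. This leaves exactly two cases: $G$ is triangle-free, or $G=K_n$ is complete.

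\textbf{Step 3: triangle-free case.} Here $\operatorname{ht} I_c(G)=2$, and by Gaeta's theorem / Hilbert--Burch a height-$2$ ideal is licci if and only if it is Cohen--Macaulay. By Step 1 the Alexander dual is $I_c(G)^\vee=(x_ax_b : ab\notin E(G))=I(\overline G)$. By Eagon--Reiner, $S/I_c(G)$ is Cohen--Macaulay iff $I(\overline G)$ has a linear resolution. By Fröberg, this holds iff the complement of $\overline G$, namely $G$, is chordal. A triangle-free chordal graph has no cycles at all, so this happens iff $G$ is a forest.

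\textbf{Step 4: complete case.} For $G=K_n$, the ideal $I_c(K_n)$ is the squarefree Veronese ideal of degree $n-2$. Its Alexander dual is generated by the triangles, i.e.\ it is the squarefree Veronese ideal of degree $3$. Squarefree Veronese ideals are Cohen--Macaulay (they are ideals of skeleta of simplices, which are shellable), so $I_c(K_n)$ is bi-Cohen--Macaulay of height $3$ for $n\ge 3$.
\begin{itemize}
\item For $n=3$ the ideal is generated by variables, hence licci. This is the triangle.
\item For $n\ge 4$ the generators have degree $n-2\ge 2$, so the ideal is not generated by variables, and \Cref{thm:licci-bi-CM-intro} shows it is not licci.
\end{itemize}
For $n=2$, $K_2$ is a forest and $I_c=(1)$, which is trivially fine.

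\textbf{Main obstacle.} I expect the main care point to be the bookkeeping in Steps 1 and 2: getting the minimal primes exactly right, in particular recognising that triangles produce height-$3$ components that break unmixedness, and handling isolated vertices consistently with the phrasing ``forest''. The remaining steps are direct applications of standard results (Gaeta, Eagon--Reiner, Fröberg) and of \Cref{thm:licci-bi-CM-intro}.
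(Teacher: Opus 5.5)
You cannot reduce away the isolated vertices, as Step 2 proposes, and the case check you deferred does not come out compatible with the statement. For graphs \emph{without} isolated vertices your argument is correct, and it is more self-contained than the paper's. The paper quotes Ficarra--Moradi for the Cohen--Macaulay classification, for the height, and for the linear resolution of $I_c(K_n)$. You instead read off the minimal primes directly and rule out ``triangle plus non-edge'' by unmixedness. You then get Cohen--Macaulayness in the triangle-free case from Eagon--Reiner and Fr\"oberg, and treat $K_n$, $n\ge 4$, through \Cref{thm:licci-biCM}, which rests on the same Huneke--Ulrich bound the paper applies.

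\textbf{The gap.} Suppose $v$ is isolated and $G$ has an edge. Then $I_c(G)=x_v\,I_c(G\setminus v)=(x_v)\cap I_c(G\setminus v)$, so $(x_v)$ is a height-one minimal prime; your own Step 1 lists it. Now suppose $G$ has at least two edges, and factor out the product of all isolated variables. The remaining ideal is generated by at least two monomials with no common variable, since every remaining vertex misses the generator of an edge through it. So that ideal has height $\ge 2$. Hence $I_c(G)$ is not unmixed, hence not Cohen--Macaulay, hence not licci, even when $G\setminus v$ is a forest. For example, take the path $x_1x_2x_3$ together with an isolated vertex $x_4$. Then $I_c(G)=(x_1x_4,x_3x_4)=(x_4)\cap(x_1,x_3)$, which is not licci.

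\textbf{Consequences for each direction.} The licci property of $I_c(G\setminus v)$ does not pass to $I_c(G)$, so no reduction of the kind you promise exists. For the ``only if'' direction you need the paper's argument instead. A height-one minimal prime forces a Cohen--Macaulay, hence unmixed, squarefree $I_c(G)$ to be principal, so $G$ has at most one edge and is a forest. For the ``if'' direction no argument can work: forests with an isolated vertex and at least two edges are counterexamples. The equivalence holds only for graphs without isolated vertices. When isolated vertices are present, $I_c(G)$ is licci if and only if $|E(G)|\le 1$.

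The paper's converse has the same blind spot. It invokes the Ficarra--Moradi Cohen--Macaulay criterion and the height-two claim, both of which assume there are no isolated vertices. So you should state that hypothesis explicitly rather than defer to a case check.
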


The last class of ideals we study are path ideals. Introduced by Conca--De Negri in their study of $M$-sequences \cite{ConcaDeNegri}, path ideals serve as a natural generalization of edge ideals of graphs. Their algebraic and homological properties are largely open due to the simple fact  that it is usually not straightforward to obtain all of their monomial minimal generators. For example, only partial results are known about when path ideals have linear resolution, or are Cohen--Macaulay (\cite{Banerjee2015,path-ideals-CM, path-linear-res}). On the other hand, it is known that for any $t\geq 2$, the $t$-path ideal of $\Sigma_t G$ is Cohen--Macaulay, where $\Sigma_t G$ denotes the suspension of length $t-1$ of $G$ (\cite[proof of Theorem~3.8]{path-ideals-CM}). These form a large class of graphs (e.g., the map $G\mapsto \Sigma_tG$ is a injective map between graphs). We remark that $\Sigma_2 G$ is more well-known as its \emph{whiskered graph}. Our last main result is a full characterization of path ideals of graphs, assuming that the graphs are obtained through suspension and that the base field is infinite.

\begin{theorem}[\protect{\Cref{thm:licci-path-ideals-suspension}}]
    Let $G$ be a finite simple graph and $t\geq 2$ a positive integer. Let $\mathfrak{m}$ denote the homogeneous maximal ideal of $T=\Bbbk[V(\Sigma_t G)]$. Consider the following statements.
\begin{enumerate}[label={\rm(\arabic*)}]
        \item $P_t(\Sigma_t G)_{\mathfrak{m}}$ is licci in $T_\mathfrak{m}$.
        \item $G$ is a star graph with possible isolated vertices.
        \item Either $t\leq 3$ and $G$ is a star graph with possible isolated vertices, or $t>3$ and $G$ has at most one edge.
    \end{enumerate}
    Then $(3) \Longrightarrow (1) \Longrightarrow (2)$. Moreover, if $\Bbbk$ is infinite, then $(1) \Longrightarrow (3)$.
\end{theorem}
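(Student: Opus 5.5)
We prove the three implications separately. Throughout, $\Sigma_t G$ is $G$ with a path of length $t-1$ (new vertices $x_v=v_1,v_2,\dots,v_t$) hung at each vertex $v$. The $t$-path ideal $P_t(\Sigma_t G)$ is Cohen--Macaulay of height $|V(G)|$, and the paths $v_1\cdots v_t$ give one generator per vertex.

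\textbf{Step 1: $(3)\Rightarrow(1)$.} We show licciness by explicit linkage. The main tool is that a monomial ideal $I$ is linked, via the regular sequence of squarefree monomials $f_v=v_1v_2\cdots v_t$ (one per whisker path), to $(f_v : v)\colon I$. This colon is again a monomial ideal, computable generator by generator.

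\begin{enumerate}
\item If $G$ has no edges, $P_t(\Sigma_tG)=(f_v : v)$ is a complete intersection.
\item If $G$ is a single edge $uw$ plus isolated vertices, the isolated vertices contribute complete-intersection summands in disjoint variables. We may drop them, since licciness is preserved by adjoining a regular sequence in new variables. What remains is the $t$-path ideal of a path on $2t$ vertices, which has height $2$ and is Cohen--Macaulay. By Gaeta's theorem (the Hilbert--Burch setting), every height-2 perfect ideal is licci. The same argument covers $t\le 3$ with $G$ a star whenever the height is $2$.
\item For a star with more than one leaf and $t\le3$, the height exceeds $2$. Here we would link down using the regular sequence of whisker generators $f_v$. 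The link should be the ideal attached to the star with one fewer leaf, possibly after a Gorenstein/complete-intersection modification. We then induct on the number of leaves, computing the colon ideals explicitly for $t=2$ (whiskered star) and $t=3$.
\end{enumerate}

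\textbf{Step 2: $(1)\Rightarrow(2)$.} We localize and restrict to induced substructures. If $G$ is not a star plus isolated vertices, then $G$ contains either two disjoint edges (i.e.\ $2K_2$ as a subgraph) or a triangle. Setting to $1$ (localizing at) suitable variables, or setting to $0$ the whisker variables not involved, yields $P_t(\Sigma_t H)$ for $H=2K_2$, $P_3$, or $K_3$ as a localization or specialization, up to adding variables. We use the following facts:
\begin{itemize}
\item licciness passes to localizations at primes containing the ideal;
\item licciness is preserved modulo regular sequences of variables.
\end{itemize}
So it suffices to show that $P_t(\Sigma_t(2K_2))$ and $P_t(\Sigma_t K_3)$ are not licci. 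For $2K_2$ the ideal is a sum of two height-2 non-complete-intersection ideals in disjoint variables. Such a sum is not licci by the Huneke--Ulrich criterion: in a minimal free resolution of a licci non-CI ideal, the degree of the last syzygy exceeds $(c-1)$ times the initial degree. Alternatively, we use the known non-licciness of tensor products of non-CI ideals. For $K_3$ we check the Huneke--Ulrich degree inequality $\reg$-type bound directly on the resolution.

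\textbf{Step 3: $(1)\Rightarrow(3)$ over infinite $\Bbbk$.} By Step 2, $G$ is a star, and we must rule out $t>3$ with at least two leaves. Reducing again by localization, it is enough to treat $G=P_3$ (the star with two leaves) and $t\ge4$. The infinite field hypothesis is what allows us to use the Huneke--Ulrich obstruction in its sharp form, via generic linkage or the numerical criterion $\operatorname{reg}$/shift inequality for licci ideals of height $c$: if $I\neq$ CI is licci, then $t_c(R/I) > (c-1)\,\mathrm{indeg}(I)+\dots$. The ideal here has height $3$ and generators of degree $t$. We compute the last shift in its resolution, either through Alexander duality and Hochster's formula or via the Taylor/Lyubeznik resolution. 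The aim is to show that this shift is at most $2t$ once $t\ge4$, which violates the inequality; for $t\le3$ the inequality holds, consistent with Step 1. This computation of the top graded Betti shift for $P_t(\Sigma_t P_3)$ is the main obstacle. I would first try to express the ideal as obtained by gluing along a splitting to get the resolution explicitly.
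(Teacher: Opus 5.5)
\textbf{Step~1, item~3 (the star case at $t=3$).} This step cannot be carried out, because what it aims to prove is false. Over an infinite field, the paper's argument for $(1)\Rightarrow(3)$ shows that $P_t(\Sigma_tG)$ is not licci for \emph{every} $t\ge 3$ and every star $G$ with at least two edges. So condition (3) in the displayed statement should read ``$t=2$ and $G$ a star, or $t>2$ and $G$ has at most one edge''. That is how the theorem is stated and proved in Section~\ref{sec:path}.

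Concretely, take $t=3$ and $G$ the path with edges $\{x_1,x_3\},\{x_2,x_3\}$. Then $\depol(P_3(\Sigma_3G))=(x_1^3,x_2^3,x_3^3)+x_3(x_1^2,x_1x_2,x_2^2,x_1x_3,x_2x_3)$. Its first reduction in the sense of \Cref{thm-Huneke-Ulrich} is $(x_1,x_2,x_3)^2$, whose $\#$-part $(x_1x_2,x_1x_3,x_2x_3)$ has height $2$ and no common factor. So this Artinian ideal is not licci, and by the polarization transfer neither is $P_3(\Sigma_3G)$.

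For $t=2$ the paper simply cites Kimura--Terai--Yoshida; your leaf-by-leaf linkage is only sketched. Your single-edge argument is correct and shorter than the paper's explicit chain of links from $I_k$ to $I_{k-2}$: split off the isolated vertices (they live in disjoint variables) and use that a height-$2$ Cohen--Macaulay ideal is licci.

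\textbf{Step~3 fails for every $t$.} The Huneke--Ulrich numerical obstruction never detects a star. By \Cref{thm:reg-inequality-equivalent}, $G$ is a star with possible isolated vertices exactly when $\reg(T/P_t(\Sigma_tG))>(t-1)n-t$. Since $\alpha=t$ and the height is $n$, this says exactly that the last shift in the resolution exceeds $(n-1)t$. For the two-leaf star, $(x_1x_2)^{t-1}$ is a socle monomial of the depolarization, so the last shift is at least $2t+1$. Your target ``at most $2t$ once $t\ge 4$'' is therefore false; for $t=4$ the last shift is $9$.

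The missing idea is to work with the Artinian ideal $\depol(P_t(\Sigma_tG))$ and apply the $\mathfrak m$-primary monomial criterion \Cref{thm-Huneke-Ulrich}. For a star with at least two leaves and $t\ge 3$, the $\#$-part of $\depol(P_t(\Sigma_tG))^{\{1\}}$ has no common factor, so the sequence $I^{\{k\}}$ stabilizes without reaching $R$. The infinite field is needed precisely to transfer licciness between $P_t(\Sigma_tG)$ and its depolarization (Faridi--Klein--Rajchgot--Seceleanu), not to sharpen a numerical bound.

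\textbf{Step~2 (reduction to small graphs).} The tool ``licciness passes modulo regular sequences of variables'' does not apply to whisker variables, since they are zero-divisors modulo the ideal. The legitimate move is to invert the whisker variables of a vertex $v$, which yields $(x_v)+P_t(\Sigma_t(G-v))$. This reaches only induced subgraphs, so the path and the cycle on four vertices are left untreated. The paper needs no reduction: each edge $\{x_a,x_b\}$ of $G$ gives $(x_a,x_b)^t\subseteq\depol(P_t(\Sigma_tG))$. This bounds every socle degree by $(t-1)n-t$ as soon as $G$ contains two disjoint edges or a triangle, and \Cref{lem:not-licci-criteria} then applies.
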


The article is organized as follows. In Section~\ref{sec:prem}, we recall the necessary definitions and known results relevant to this work, and derive several useful consequences. In Sections~\ref{sec:bi-CM} and \ref{sec:complementary}, we characterize bi-Cohen–Macaulay licci monomial ideals and licci complementary edge ideals, respectively. The remaining two sections are devoted to licci path ideals. In Section~\ref{sec:path}, we characterize licci path ideals of suspensions of graphs and of licci path ideals of cycles.

\section*{Acknowledgements}

O. P. Bhardwaj is supported by the ANRF National Postdoctoral Fellowship. The authors acknowledge the support from the Infosys Foundation.

\section{Preliminaries}\label{sec:prem}
Let  $R$ be a commutative Noetherian ring. Let $I$ and $J$ be ideals of $R$, we say that $I$ and $J$ are \emph{directly linked}, denoted $I\sim J$, if there is a regular sequence $\underline{x}=x_1, \ldots, x_c\in I\cap J$ such that $I=(\underline{x})\colon_R\, J$ and $J= (\underline{x}) \colon_R\, I$. In general, the ideals $I$ and $J$ are said to be \emph{linked} if there exists a sequence $I_0, I_1, \ldots, I_r$ of ideals such that
$$I=I_0 \sim I_1 \sim \ldots \sim I_r=J.$$ 
If $I$ is linked to $J$ then we say that $I$ and $J$ are in same linkage class. 
\begin{definition}
    Let $R$ be a commutative Noetherian local ring and $I$ be an ideal of $R$. The ideal $I$ is called \emph{licci} if it is in the linkage class of a complete intersection ideal. 
\end{definition}

\begin{remark}\label{rem:hgt2-and3}
It is well-known (see e.g.~\cite[Lemma 2.2]{Kimura-Terai-Yoshida}) that if  $R$ is a regular local ring and $I\subseteq R$ is an ideal. Then we have the following.
\begin{enumerate}
    \item If $I$ is Cohen--Macaulay with $\height(I)=2$, then $I$ is licci in $R$.
    \item If $I$ is Gorenstein with $\height(I)=3$, then $I$ is licci in $R$.
\end{enumerate}
\end{remark}

\begin{remark}\label{rem:height-1}
    Let $S$ be a polynomial ring over a field, $\mathfrak{m}$ its homogeneous maximal ideal, and $I$ a homogeneous ideal of $S$. Since $S$ is a  unique factorization domain, any prime ideal of height 1 is principal. Therefore if $I$ is Cohen-Macaulay with $\height(I)=1$, then $I$ is also principal. In particular, $I_\mathfrak{m}$ is licci in $R_{\mathfrak{m}}$.
\end{remark}

\begin{remark}
    Let $S$ be a polynomial ring over a field, $\mathfrak{m}$ its homogeneous maximal ideal, and $I$ a homogeneous ideal of $S$. Let $I$ be a homogeneous ideal of $S$ then $\height(I) = \height(I_{\mathfrak{m}})$ (e.g., see~\cite[Proposition 1.5.15]{Bruns-Herzog}). We will use this fact without repeated reference.
\end{remark}

\begin{remark}\label{rem:Nagata}
    Let $S$ be a polynomial ring over a field, $\mathfrak{m}$ its homogeneous maximal ideal, and $I$ a homogeneous ideal of $S$. If $I$ is a homogeneous ideal such that $I_\mathfrak{m}$ is licci, then $I_{\mathfrak{m}}$ is Cohen-Macaulay \cite{Peskine-Szpiro}. By Nagata's conjecture (Theorem) \cite{localization-CM}, $I$ is Cohen-Macaulay. We will use this fact throughout the article.
\end{remark}

Let $S=\Bbbk[x_1, \ldots, x_n]$ be a standard graded polynomial ring and $I$ be a homogeneous ideal of $S$. The \emph{projective dimension of $S/I$}, denoted $\pd(S/I)$, is the largest integer $i$ for which $\operatorname{Tor}_i^S(S/I, \Bbbk)\neq 0$.  The $(i,j)^{th}$ Betti number of $S/I$ over $S$, denoted $\beta_{i,j}^S(S/I)$ is defined as 
$$\beta_{i,j}^S(S/I)= \dim_{\Bbbk} \operatorname{Tor}_i^{S}(S/I, \Bbbk)_j.$$
The \emph{Castelnuovo--Mumford regularity} of $S/I$, or regularity for short, denoted $\reg(S/I)$ is defined as 
$$\reg(S/I)= \max \{ j -i  \mid \beta_{i,j}^S(S/I)\neq 0\}.$$

\begin{lemma}\label{lem:reg-CM}
    Let $I$ be a homogeneous ideal in a polynomial ring $S$ such that $S/I$ is Cohen--Macaulay. Then
    \[
    \reg (S/I) = \max\{ j-\pd (S/I) \mid \beta_{\pd (S/I), j }(S/I)\neq 0\}. 
    \]
    In other words, regularity is achieved at the end of the minimal free resolution.
\end{lemma}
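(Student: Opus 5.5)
The plan is to use graded local duality together with the Cohen--Macaulay hypothesis. This converts the regularity, which is a statement about local cohomology, into a statement about the last module of the minimal free resolution. Set $n=\dim S$, $d=\dim S/I$ and $p=\pd(S/I)$. By the Auslander--Buchsbaum formula and the Cohen--Macaulay assumption, $p=n-d$, and the only nonvanishing local cohomology module of $S/I$ with respect to $\mathfrak m$ is $H^d_{\mathfrak m}(S/I)$.

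First I will recall the local-cohomology description of regularity:
\[
\reg(S/I)=\max_i\{a_i(S/I)+i\}, \qquad a_i(M)=\max\{j \mid H^i_{\mathfrak m}(M)_j\neq 0\}.
\]
This is the standard Eisenbud--Goto equivalence, which I will quote rather than reprove. In the Cohen--Macaulay case it gives
\[
\reg(S/I)=a_d(S/I)+d.
\]

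Next I will apply graded local duality:
\[
H^d_{\mathfrak m}(S/I)_j\cong \operatorname{Ext}^{p}_S\bigl(S/I,S(-n)\bigr)_{-j}^{\vee}.
\]
The module $\omega=\operatorname{Ext}^{p}_S(S/I,S(-n))$ is computed by dualizing the minimal free resolution $F_\bullet$ of $S/I$. Since $S/I$ is Cohen--Macaulay, $\operatorname{Hom}(F_\bullet,S(-n))$ is exact except at the end, so it is a minimal free resolution of $\omega$. Its generators come from
\[
F_p^{*}(-n)=\bigoplus_j S(j-n)^{\beta_{p,j}},
\]
so the minimal generator degrees of $\omega$ are exactly $n-j$ for those $j$ with $\beta_{p,j}\neq 0$. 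The initial degree of $\omega$ is therefore
\[
n-\max\{j \mid \beta_{p,j}\neq 0\}.
\]

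Putting these together,
\[
a_d=-\operatorname{indeg}(\omega)=\max\{j\mid \beta_{p,j}\neq 0\}-n,
\]
and hence
\[
\reg(S/I)=\max\{j\mid\beta_{p,j}\neq0\}-n+d=\max\{j\mid\beta_{p,j}\neq0\}-p.
\]
This is the claimed formula.

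Alternatively, I could give an elementary route that avoids local cohomology. I would reduce modulo a maximal regular sequence of linear forms, enlarging the field if necessary, which preserves the graded Betti numbers. This reduces to the Artinian case, where $\reg$ equals the top socle degree. The socle of $S/I$ is read off from $\operatorname{Tor}_p(S/I,\Bbbk)$ via the Koszul complex, which shifts degrees by $p$.

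The main obstacle is the step asserting that the largest shift in the resolution is attained at homological degree $p$. In the duality route this rests on the dual complex being the minimal resolution of $\omega$, so that the degree $j$ maximizing $\beta_{p,j}$ really controls $a_d$. In the Artinian route it rests on the fact that the maximal degree among the $\beta_{i,j}$ grows with $i$ in the Cohen--Macaulay case. Either way the remaining work is a citation of standard facts, for instance Bruns--Herzog's treatment of canonical modules.
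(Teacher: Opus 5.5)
Your proof is correct. The paper gives no argument of its own; it simply cites \cite[Exercise 20.19]{Eis-book}. That exercise comes right after the Artinian socle-degree formula the paper later quotes as Lemma~\ref{lem:formula-reg-Artinian}, so the route it most likely intends is your second one. There you extend the field, cut down by a maximal regular sequence of linear forms (which preserves every $\beta_{i,j}$ and $\pd$), and read the top socle degree of the Artinian reduction off $\operatorname{Tor}_p(\,\cdot\,,\Bbbk)$ via the Koszul complex.

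Your main argument, through the Eisenbud--Goto formula $\reg(S/I)=\max_i\{a_i(S/I)+i\}$ and graded local duality, is a genuinely different route. It gains two things. It needs no field extension or generic linear forms. It also gives a sharper conclusion: the largest last shift equals $n-\operatorname{indeg}(\omega_{S/I})$, so that $\reg(S/I)=a_d(S/I)+d$. The cost is heavier machinery: local cohomology, canonical modules, and exactness of the dualized resolution, which you correctly obtain from $\operatorname{grade} I=\height I=p$. The Artinian route, by contrast, is completely elementary.

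One small correction to your last paragraph. The Artinian route does not need, as an input, that the maximal shifts increase with $i$. Let $M$ be an Artinian graded module over a polynomial ring in $p$ variables, with top nonzero degree $D$. The Koszul complex gives $\operatorname{Tor}_i(M,\Bbbk)_j=0$ whenever $j-i>D$, because the relevant chain group is $\bigwedge^i\Bbbk^p\otimes M_{j-i}=0$. On the other hand, $\operatorname{Tor}_p(M,\Bbbk)\cong(0:_M\mathfrak m)(-p)$ is nonzero in degree $p+D$, since $M_D$ lies in the socle. So the maximum of $j-i$ is attained at $i=p$ automatically. Monotonicity of the maximal shifts is a consequence here, not an ingredient; in your duality route it likewise follows from the strict increase of the minimal shifts in the dual resolution.
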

\begin{proof}
    Follows from \cite[Exercise 20.19]{Eis-book}.
\end{proof}

Note that if $I$ is a homogeneous ideal of $S=\Bbbk[x_1, \ldots, x_n]$ generated by a regular sequence $f_1, \ldots,f_r$ with $\deg(f_i)=d_i>0$, then the Koszul complex on $f_1, \ldots, f_r$ gives a minimal free resolution of $S/I$ over $S$, and we have $\reg(S/I)=\left( \sum\limits_{i=1}^r d_i\right) -r  $. 
The following result due to Huneke and Ulrich \cite{HunekeUlrich} says that if $I_\mathfrak{m}$ is licci in $S_{\mathfrak{m}}$, then $\reg(S/I)$ exhibits a similar behaviour. Let $\alpha(I)$ denote the minimum degree of a nonzero polynomial in $I$.
\begin{lemma}\label{lem:not-licci-criteria}
    Let $I$ be a homogeneous ideal in a polynomial ring $S$ such that $S/I$ is Cohen--Macaulay, and $\mathfrak{m}$ be the homogeneous maximal ideal of $S$. If we have
    \[
    \reg (S/I)\leq (\alpha(I)-1)\pd (S/I) -\alpha(I),
    \]
    then $I_\mathfrak{m}$ is not licci in $S_{\mathfrak{m}}$.
\end{lemma}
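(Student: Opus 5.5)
The plan is to reduce this to the Huneke--Ulrich necessary condition for licci homogeneous ideals \cite{HunekeUlrich}, after rewriting its hypothesis in terms of regularity and projective dimension. Set $g=\height(I)$. Since $S/I$ is Cohen--Macaulay, the Auslander--Buchsbaum formula gives $\pd(S/I)=\dim S-\operatorname{depth}(S/I)=\dim S-\dim(S/I)=g$. The minimal graded free resolution of $S/I$ therefore has length $g$, and its last module is $F_g=\bigoplus_j S(-j)^{\beta_{g,j}}$. Let $n_g=\max\{j \mid \beta_{g,j}(S/I)\neq 0\}$ denote its largest shift.

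The key step is to restate the assumption as a bound on $n_g$. By \Cref{lem:reg-CM}, the regularity is attained at the last step of the resolution, so $\reg(S/I)=n_g-g$. The hypothesis $\reg(S/I)\le (\alpha(I)-1)\pd(S/I)-\alpha(I)$ then reads
\[
n_g-g\le (\alpha(I)-1)g-\alpha(I),\qquad\text{that is,}\qquad n_g\le \alpha(I)(g-1).
\]

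Next I would invoke the Huneke--Ulrich degree bound \cite[Corollary~5.13]{HunekeUlrich}. As I recall it, it says that if $I$ is a homogeneous ideal with $S/I$ Cohen--Macaulay of codimension $g$ and $I_\mathfrak{m}$ is licci in $S_\mathfrak{m}$, then the largest last shift satisfies $n_g\ge \alpha(I)(g-1)+1$, i.e.\ $n_g>(g-1)\alpha(I)$. This contradicts the displayed inequality, so $I_\mathfrak{m}$ cannot be licci. The inequality is strict, which is consistent with complete intersections: for a regular sequence of degrees $d_1\le\dots\le d_g$ we have $\alpha(I)=d_1$ and $n_g=\sum d_i\ge g\,d_1>(g-1)d_1$.

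The main obstacle is matching the exact form and hypotheses of the cited Huneke--Ulrich result. I must make sure its inequality is strict and phrased in terms of $\alpha(I)$ and the codimension, and that it applies to the localization $I_\mathfrak{m}$ of a graded ideal. The degenerate cases also need a check. If $g=0$, then $I=0$ and the hypothesis reads $\reg(S/I)\le -\alpha(I)$; here I would use the convention on $\alpha(0)$ to confirm the hypothesis either fails or that $0$ is handled separately. If $g=1$, then $n_1=\alpha(I)$ for a principal ideal, so the hypothesis $n_1\le 0$ is impossible and the statement is vacuous. Apart from these points, the argument is simply bookkeeping with \Cref{lem:reg-CM}.
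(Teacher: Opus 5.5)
Your proposal is correct and follows the paper's proof: you use \Cref{lem:reg-CM} to rewrite the hypothesis as $\max\{j \mid \beta_{g,j}(S/I)\neq 0\}\le (g-1)\alpha(I)$, which contradicts the necessary condition for licci ideals in \cite[Corollary~5.13]{HunekeUlrich}. Your additional remarks (identifying $\pd(S/I)$ with $\height(I)$ via Auslander--Buchsbaum, and checking the degenerate cases $g\le 1$) are harmless extras that the paper omits.
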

\begin{proof}
    Note that if $S/I$ is not  Cohen--Macaulay, then $I$ is not licci in $S$. So, we assume that $S/I$ is Cohen--Macaulay (Remark~\ref{rem:Nagata}). Let $g = \pd(S/I)$, and
    \[0 \longrightarrow \bigoplus_{i=1}^{b_g} S(-n_{gi}) \longrightarrow \bigoplus_{i=1}^{b_{g-1}} S(-n_{(g-1)i}) \longrightarrow \cdots \longrightarrow \bigoplus_{i=1}^{b_1} S(-n_{1i}) \longrightarrow S \longrightarrow S/I \longrightarrow 0\]
    be a graded minimal free resolution of $S/I$. Note that by definition, $\alpha(I)= \mathrm{min}\{n_{1i}\}_i$, and by Lemma~\ref{lem:reg-CM}, $\mathrm{max}\{n_{gi}\}_i=\reg (S/I) + g$. Thus our inequality in the hypothesis can be rewritten as follows:
    \[
    \mathrm{max}\{n_{gi}\}_i-g \leq (\mathrm{min}\{n_{1i}\}_i-1) g - \mathrm{min}\{n_{1i}\}_i.
    \]
    Equivalently, we obtain
    \[
    \mathrm{max}\{n_{gi}\}_i \leq (\mathrm{min}\{n_{1i}\}_i-1) g - \mathrm{min}\{n_{1i}\}_i+g = (g-1)\mathrm{min}\{n_{1i}\}_i.
    \] 
    By \cite[Corollary 5.13]{HunekeUlrich}, this implies that $I_{\mathfrak m}$ is not licci in $S_{\mathfrak m}$, as desired.
\end{proof}

For the rest of the section let $I \subseteq S$ be an $\mathfrak{m}$-primary monomial ideal. We recall a criterion, due to Huneke and Ulrich~\cite{HunekeUlrichMonomial}, for $I$ to be licci. Note that every $\mathfrak{m}$-primary monomial ideal $I$ can be written uniquely in standard form,
$$I = (x_1^{a_1},x_2^{a_2}, \ldots, x_n^{a_n}) + I^{\#}$$
where $I^{\#}$ is generated by monomials that together with $\{x_1^{a_1},x_2^{a_2}, \ldots, x_n^{a_n}\}$ generate $I$ minimally. Note that if $I$ is a complete intersection then $I^{\#} = 0$. If $I$ is not a complete intersection then $I^{\#}$ can be written uniquely as $I^{\#} = \mathbf{x}^BK$, where $\mathbf{x}^B = x_1^{b_1}\cdots x_n^{b_n}$ is a monomial and $K$ a monomial ideal of height at least two. Set $I^{\{0\}} = I$, and define
$$ I^{\{1\}} = (x_1^{a_1-b_1}, \ldots, x_n^{a_n-b_n}) + K$$
If $I$ is a complete intersection, set $I^{\{1\}} = S$. For $k>1$, define
inductively $I^{\{k\}} = (I^{\{k-1\}})^{\{1\}}$, provided that $I^{\{k-1\}} \neq S$. Using this notation, the following theorem characterizes the licci property of $\mathfrak{m}$-primary monomial ideals.

\begin{theorem}[{\cite[Theorem 2.6]{HunekeUlrichMonomial}}]\label{thm-Huneke-Ulrich}
Let $S = \Bbbk[x_1,\ldots,x_n]$ and $I \subseteq S$ be an $\mathfrak{m}$-primary monomial ideal of $S$. The following conditions are equivalent.
\begin{enumerate}[label=\rm (\roman*)]

\item $I_{\mathfrak{m}}$ is licci in $S_{\mathfrak{m}}$.

\item $(I^{\{k\}})^{\#}$ has height at most one whenever $I^{\{k\}} \neq S$. 

\item $I^{\{k\}} = S$ for some $k$.

\end{enumerate}
\end{theorem}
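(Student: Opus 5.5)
The plan is to prove (ii)$\Leftrightarrow$(iii) by a colength argument, (iii)$\Rightarrow$(i) by an explicit double link, and (i)$\Rightarrow$(ii) by a non-licci argument that I have not fully pinned down.

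\emph{The double link.} Write $I=(x_1^{a_1},\dots,x_n^{a_n})+\mathbf{x}^BK$ in standard form, with $I$ not a complete intersection. First note that $b_i<a_i$ for every $i$. Indeed, if $b_i\ge a_i$, every generator of $\mathbf{x}^BK$ would be divisible by $x_i^{a_i}$, contradicting minimality.

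Put $\mathbf{x}^{A}=(x_1^{a_1},\dots,x_n^{a_n})$ and $\mathbf{x}^{A-B}=(x_1^{a_1-b_1},\dots,x_n^{a_n-b_n})$. Both are $\mathfrak m$-primary monomial complete intersections. For a monomial complete intersection one has $(\mathbf{x}^A):\mathbf{x}^B=(\mathbf{x}^{A-B})$ coordinatewise. Hence
\[
J:=(\mathbf{x}^A):I=\bigl((\mathbf{x}^A):\mathbf{x}^B\bigr):K=(\mathbf{x}^{A-B}):K .
\]
Since $I$ is $\mathfrak m$-primary, hence unmixed, in the Gorenstein ring $S_{\mathfrak m}$, this is a direct link $I_{\mathfrak m}\sim J_{\mathfrak m}$.

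Now $J$ contains the regular sequence $\mathbf{x}^{A-B}$. Linking again gives
\[
(\mathbf{x}^{A-B}):J=(\mathbf{x}^{A-B})+K=I^{\{1\}},
\]
using double annihilation for $\mathfrak m$-primary ideals containing a complete intersection. So $I_{\mathfrak m}$ is doubly linked to $I^{\{1\}}_{\mathfrak m}$, and the two lie in the same linkage class.

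\emph{(iii)$\Rightarrow$(i).} Iterate the double link. If $I^{\{k\}}=S$, then $I^{\{k-1\}}$ is a complete intersection, so $I$ is licci.

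\emph{(ii)$\Leftrightarrow$(iii).} If $\height (I^{\{k\}})^{\#}\le 1$ and $(I^{\{k\}})^{\#}\neq 0$, then $B\neq 0$. In that case $I^{\{k+1\}}\supsetneq I^{\{k\}}$: the pure powers drop, and $K$ strictly contains $\mathbf{x}^BK$. So $\dim_\Bbbk S/I^{\{k\}}$ strictly decreases, and the sequence must reach a complete intersection and then $S$. Conversely, if some $(I^{\{k\}})^{\#}$ has height $\ge 2$, then $B=0$ and $I^{\{k+1\}}=I^{\{k\}}$. The sequence stabilizes without reaching $S$, so (iii) fails. The case $(I^{\{k\}})^{\#}=0$ is the complete intersection case and gives $S$ at the next step.

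\emph{(i)$\Rightarrow$(ii), the main obstacle.} By the double link, $I$ licci forces every $I^{\{k\}}$ to be licci. It therefore suffices to show the following: if $I$ is not a complete intersection and $I^{\#}=K$ has height $\ge2$, i.e.\ $B=0$, then $I_{\mathfrak m}$ is not licci.

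Here I would use Huneke--Ulrich's structure theory. A licci non-complete-intersection homogeneous ideal admits a link by a regular sequence of minimal-degree-type elements (a ``minimal link'') to an ideal of strictly smaller colength, or more precisely smaller initial degree of the socle. For monomial $\mathfrak m$-primary ideals, such a link can be taken to be the monomial link by $\mathbf{x}^A$, because the pure powers are the only candidates for a monomial regular sequence in $I$ up to the generic-element argument. When $B=0$, that link returns $J=(\mathbf{x}^A):K$, whose relinked ideal is again $I$, so no decrease is possible. This gives a contradiction with the decreasing-chain characterization of licci ideals.

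The delicate point is justifying that non-monomial (generic) links cannot do better. I would first try to handle it via the Huneke--Ulrich criterion comparing $\max$ socle degree of $S/I$ with the degrees of a minimal reduction. Failing that, I would try a deformation to generic links, to show that the socle-degree inequality of \cite[Corollary 5.13]{HunekeUlrich} type holds exactly when $B=0$.
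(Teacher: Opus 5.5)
Your arguments for (ii)$\Leftrightarrow$(iii) and (iii)$\Rightarrow$(i) are sound. The double link $I\sim(\mathbf{x}^A):I=(\mathbf{x}^{A-B}):K\sim(\mathbf{x}^{A-B})+K=I^{\{1\}}$ is correct, and so is the colength argument. There is one slip. $I^{\{k\}}=S$ does not force $I^{\{k-1\}}$ to be a complete intersection: it also happens when $(I^{\{k-1\}})^{\#}$ is principal, i.e.\ $K=S$, and then the double link degenerates. The conclusion survives, because $(\mathbf{x}^A):I^{\{k-1\}}=(\mathbf{x}^{A-B})$ is a complete intersection directly linked to $I^{\{k-1\}}$. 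Note that the paper does not prove this theorem; it quotes it from Huneke--Ulrich. The substance of their result is exactly the implication (i)$\Rightarrow$(ii), and that is the part your proposal leaves unproved.

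Your reduction is correct: it suffices to show that if $I=(\mathbf{x}^A)+K$ is not a complete intersection and $B=0$, then $I_{\mathfrak m}$ is not licci. This needs an obstruction that controls \emph{all} links, including those by non-monomial regular sequences, and your sketch does not supply one.

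\begin{itemize}
\item The claim that a ``minimal'' link can be taken to be the monomial link by $\mathbf{x}^A$ is unfounded. Take $I=(x^{10},y^{10},z^{10},xy,yz,xz)$, which has $B=0$. Any regular sequence in $I$ of smallest possible degrees has degrees $2,2,10$: two quadrics from $(xy,yz,xz)$ plus a form of degree $10$. It is not the pure powers, and it cannot be monomial.
\item The fact that relinking $J=(\mathbf{x}^A):K$ by $\mathbf{x}^A$ returns $I$ holds for every link of every ideal. It says nothing about links of $J$ by other sequences.
\item Even the sharper fact that $\mathbf{x}^A$ is part of a minimal generating set of both $I$ and $J$ is not an obstruction in itself. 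The licci height-$3$ Gorenstein ideal $(xy,xz,yz,x^2-y^2,x^2-z^2)$ is linked by $x^2-y^2,\,x^2-z^2,\,xy$ to $(z,x^2,xy,y^2)$, and the linking sequence is minimal on both sides.
\item Your fallback via the degree inequality of Lemma~\ref{lem:not-licci-criteria} cannot work. For the same $I$, the socle of $S/I$ is spanned by $x^9,y^9,z^9$, so $\reg(S/I)=9>1=(\alpha(I)-1)\pd(S/I)-\alpha(I)$. The criterion is therefore inconclusive, even though the theorem says $I$ is not licci.
\end{itemize}

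Closing the gap needs the genuinely non-elementary input from Huneke--Ulrich's linkage theory that their Theorem 2.6 packages. An argument about the single monomial link by $\mathbf{x}^A$ is not enough.
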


\begin{example}
    Consider $S = \Bbbk[x_1,x_2,x_3]$ and $I = (x_1^2,x_2^2,x_3^2,x_1x_2,x_2x_3)$. Write $I = (x_1^2,x_2^2,x_3^2) + x_2(x_1,x_3).$ Thus, $I^{\{1\}} = (x_1^2,x_2,x_3^2) + (x_1,x_3) = (x_1,x_2,x_3).$ Since $I^{\{1\}}$ is a complete intersection, $I^{\{2\}} = S$. Therefore, by the above theorem $I_{\mathfrak{m}}$ is licci in $S_{\mathfrak{m}}$.
\end{example}

\section{Licci bi-Cohen--Macaulay ideals}\label{sec:bi-CM}

Throughout the rest of the article let $[n]$ denote the set $\{1,\dots, n\}$ for any positive integer $n$.

Let $I$ be a squarefree monomial ideal of $S=\Bbbk[x_1,\dots, x_n]$. For each subset $F\subseteq [n]$, set $x_F\coloneqq \prod_{i\in F}x_i$. Let 
\[
I=\bigcap_{i=1}^m (x_j\ \vert \ j\in F_i)
\]
be the unique primary decomposition of $I$, where $F_1,\dots, F_m$ are some subsets of $[n]$. The \emph{Alexander dual} of $I$, denoted $I^\vee$, is defined as
\[
I^\vee \coloneqq (x_{F_i}\ \vert\ i\in [m] ).
\]
It is well known that $(I^\vee)^\vee=I$. By definition, we have $\alpha(I)=\height(I^\vee)$ and $\alpha(I^\vee)=\height(I)$.

Recall that $I$ is called \emph{bi-Cohen--Macaulay} if $I$ and $I^\vee$ are both Cohen-Macaulay. We prove our first main result, the full characterization of licci bi-Cohen--Macaulay ideals.

\begin{theorem}\label{thm:licci-biCM}
    Let $I$ be a bi-Cohen--Macaulay squarefree monomial ideal in a polynomial ring $S$ over a field. Let $\mathfrak{m}$ be the homogeneous maximal ideal of $S$. Then $I_{\mathfrak{m}}$ is licci in $S_{\mathfrak{m}}$ if and only if $\operatorname{ht} I \leq 2$ or $I$ is generated by variables. 
\end{theorem}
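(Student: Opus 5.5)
The plan is to prove sufficiency from \Cref{rem:hgt2-and3} and \Cref{rem:height-1}, and necessity from the Huneke--Ulrich criterion \Cref{lem:not-licci-criteria}. The bi-Cohen--Macaulay hypothesis enters through the Eagon--Reiner theorem, which pins down the regularity exactly.

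\emph{Sufficiency.} Since $I$ is Cohen--Macaulay, there are three cases.
\begin{enumerate}
\item If $\height(I)=1$, then $I_{\mathfrak m}$ is licci by \Cref{rem:height-1}.
\item If $\height(I)=2$, then $I_{\mathfrak m}$ is a Cohen--Macaulay ideal of height two in the regular local ring $S_{\mathfrak m}$, hence licci by \Cref{rem:hgt2-and3}(1).
\item If $I$ is generated by variables, then $I_{\mathfrak m}$ is a complete intersection and trivially licci.
\end{enumerate}
(If $I=0$ or $I=S$, these cases are degenerate and can be dismissed separately.)

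\emph{Necessity.} Suppose $I_{\mathfrak m}$ is licci and $g:=\height(I)\ge 3$. Set $d:=\alpha(I)=\height(I^\vee)$.
\begin{itemize}
\item Since $S/I^\vee$ is Cohen--Macaulay, the Eagon--Reiner theorem gives that $I=(I^\vee)^\vee$ has a linear resolution. All minimal generators of $I$ therefore have degree $d$, and $\reg(S/I)=d-1$.
\item Since $S/I$ is Cohen--Macaulay, Auslander--Buchsbaum gives $\pd(S/I)=g$.
\item If $d=1$, every minimal generator of $I$ is linear, so $I$ is generated by variables, which is one of the allowed conclusions.
\item If $d\ge 2$, we test the inequality of \Cref{lem:not-licci-criteria}, namely $d-1\le (d-1)g-d$, or equivalently $2d-1\le (d-1)g$. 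Since $g\ge 3$, we have $(d-1)g\ge 3d-3\ge 2d-1$, where the last step uses $d\ge 2$. So the inequality holds.
\item Hence $I_{\mathfrak m}$ is not licci, a contradiction. So either $\height(I)\le 2$ or $I$ is generated by variables.
\end{itemize}

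The main point needing care is importing Eagon--Reiner (Cohen--Macaulayness of $S/I^\vee$ is equivalent to $I$ having a linear resolution), since it is not stated earlier in the paper. It should be cited explicitly. With it in hand, the rest is the short arithmetic above.
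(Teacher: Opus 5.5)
Your proof is correct and follows essentially the same route as the paper: sufficiency via \Cref{rem:hgt2-and3} and \Cref{rem:height-1}, and necessity via \Cref{lem:not-licci-criteria} with $\reg(S/I)=\alpha(I)-1$ supplied by Alexander duality. The paper gets this regularity from Terai's formula $\reg(S/I)=\pd(S/I^\vee)-1$ together with $\pd(S/I^\vee)=\height(I^\vee)=\alpha(I)$, whereas you cite the equivalent Eagon--Reiner theorem; this makes the step from $\alpha(I)=1$ to $I$ being generated by variables immediate, a step the paper leaves implicit.
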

\begin{proof}
    If $I$ is generated by variables then $I$ is a complete intersection, and thus $I_\mathfrak{m}$ is licci, as desired. On the other hand, if $\height(I)\leq 2$, then $I_\mathfrak{m}$ is licci by Remarks~\ref{rem:hgt2-and3} and \ref{rem:height-1}, as desired.
    
    Now suppose that $I$ is licci. Then by Lemma~\ref{lem:not-licci-criteria}, we have 
    \[
    \reg (S/I) > (\alpha(I)-1)\pd (S/I) - \alpha(I) = (\alpha(I) -1)\operatorname{ht}I - \alpha(I),
    \]
    where $\pd(S/I)=\height(I)$ is due to $S/I$ being Cohen-Macaulay.
    By \cite{Terai}, we have $\reg(S/I) = \pd(S/I^\vee) - 1.$ Therefore, we get  
\[\pd(S/I^\vee) - 1 > (\alpha(I)-1)\height(I) - \alpha(I),\]
and thus $(\alpha(I)-1)(\height(I) -1) < \alpha(I)$. If $\height(I)\leq 2$, then we are done. Now we can assume that $\height(I)\geq 3$. The inequality $(\alpha(I)-1)(\height(I) -1) < \alpha(I)$ then implies that $\alpha(I)=1$, i.e., $I$ is generated by variables, as desired. 
\end{proof}

Gitler and Valencia \cite{Gitler-Valencia} proved that for if $I$ is an unmixed squarefree monomial ideal such that $I$ is generated in degree 2 and every variable of $S$ appears in at least one monomial minimal  generator of $I$,  then 
\[
\height(I) \geq \left\lceil \frac{n}{2} \right\rceil.
\]
A generalization of this result to all squarefree monomial ideals remains open. Using similar arguments as in the proof of Theorem~\ref{thm:licci-biCM}, we obtain the following result for licci squarefree monomial ideals, resembling Gitler-Valencia theorem.

\begin{theorem}
    Let $I$ be a licci squarefree  monomial ideal of $S=\Bbbk[x_1, \ldots, x_n]$.  Let $\mathfrak{m}$ be the homogeneous maximal ideal of $S$. If $I_\mathfrak{m}$ is licci in $S_{\mathfrak{m}}$, then 
    \[\height(I) \leq \left\lfloor \frac{n}{\alpha(I)}\right\rfloor + 1.\]
\end{theorem}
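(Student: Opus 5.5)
We will argue as in the proof of Theorem~\ref{thm:licci-biCM}, using Lemma~\ref{lem:not-licci-criteria} together with Terai's formula and a trivial bound on the projective dimension of $S/I^\vee$.

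First, if $I_\mathfrak{m}$ is licci then $S/I$ is Cohen--Macaulay by Remark~\ref{rem:Nagata}, so $\pd(S/I)=\height(I)$. By the contrapositive of Lemma~\ref{lem:not-licci-criteria},
\[
\reg(S/I) > (\alpha(I)-1)\height(I) - \alpha(I).
\]
By \cite{Terai}, $\reg(S/I)=\pd(S/I^\vee)-1$. Since $S/I^\vee$ is a cyclic module over a polynomial ring in $n$ variables, Hilbert's syzygy theorem gives $\pd(S/I^\vee)\le n$, hence $\reg(S/I)\le n-1$. Combining the two,
\[
n-1 > (\alpha(I)-1)\height(I)-\alpha(I), \quad\text{i.e.}\quad (\alpha(I)-1)(\height(I)-1) < n .
\]

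The conclusion needs denominator $\alpha(I)$, not $\alpha(I)-1$, so we must sharpen the bound $\pd(S/I^\vee)\le n$. Any squarefree monomial has degree at most $n$, and each generator of $I$ has degree at least $\alpha(I)$. Using the graded Betti numbers at the end of the resolution, we get $\reg(S/I)+\height(I)=\max\{j : \beta_{\height(I),j}(S/I)\neq 0\}\le n$, since the resolution of $S/I$ is squarefree-multigraded (Hochster / Taylor). Hence $\reg(S/I)\le n-\height(I)$. Plugging this in:
\[
n-\height(I) > (\alpha(I)-1)\height(I)-\alpha(I),
\]
that is, $\alpha(I)\height(I) < n+\alpha(I)$. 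Therefore $\height(I) < n/\alpha(I)+1$, which gives $\height(I)\le \lfloor n/\alpha(I)\rfloor+1$. The strict inequality leaves some slack; integrality of $\height(I)$ is all that is needed.

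The main obstacle is this sharpening. The Terai bound alone is too weak, and the key observation is that the last shift of a squarefree monomial resolution is at most $n$. We justify this via the multigraded (Taylor) resolution: all multidegrees are lcm's of squarefree monomials, hence squarefree, hence of total degree at most $n$. The case $\alpha(I)=1$ needs no special care, since the argument holds for every value of $\alpha(I)$.
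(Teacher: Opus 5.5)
Your proof is correct, and it reaches the same key inequality $\reg(S/I)\le n-\height(I)$ as the paper, but by a different route.

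The paper passes to the Alexander dual. It uses Terai's formula $\reg(S/I)=\pd(S/I^\vee)-1$, then bounds $\pd(S/I^\vee)\le n-\height(I)+1$. For that bound it observes that the minimal shifts $f(i)=\min\{j\mid \beta_{i,j}(S/I^\vee)\neq 0\}$ strictly increase, start at $f(1)=\alpha(I^\vee)=\height(I)$, and end at $f(p)\le n$.

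You instead stay on the side of $S/I$. Since $S/I$ is Cohen--Macaulay, Lemma~\ref{lem:reg-CM} says the regularity is read off at homological degree $\height(I)$. Every shift there is a squarefree multidegree, hence at most $n$.

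What each route buys:
\begin{itemize}
\item Your route avoids both Alexander duality and the strict-increase fact. It uses only tools already in the paper (Lemma~\ref{lem:reg-CM} and the squarefree multigrading of the resolution), so it is somewhat more elementary.
\item The price is that your bound uses the Cohen--Macaulayness of $S/I$. The paper's bound $\reg(S/I)\le n-\height(I)$ holds for every squarefree monomial ideal. This costs nothing here, since licci forces Cohen--Macaulay.
\end{itemize}

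Your first paragraph (Terai plus Hilbert's syzygy theorem) is superfluous once the sharpened bound is in place and can be dropped.

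As you note, the strict inequality $\height(I)<n/\alpha(I)+1$ leaves slack. It actually gives $\height(I)\le\lceil n/\alpha(I)\rceil$, which is one better than the stated bound when $\alpha(I)$ divides $n$.
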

\begin{proof}
    Since $I_\mathfrak{m}$ is licci, by Lemma \ref{lem:not-licci-criteria}, we get 
    \[ \reg(S/I) > (\alpha(I)-1)\pd(S/I)-\alpha(I) = (\alpha(I)-1) \operatorname{ht} (I)\alpha(I).\]
By \cite{Terai}, we have $\reg(S/I)=\pd(S/I^\vee)-1$. This gives $\pd(S/I^\vee) - 1> (\alpha(I) -1)\height(I) -\alpha(I)$. Let $p$ be the projective dimension of $S/I^\vee$. Then, the function $f: \{0,1,\ldots,p\} \longrightarrow \mathbb{Z}$ given by $f(i) = \mathrm{min}\{j \mid \beta_{ij}(S/I^\vee) \neq 0\}$ is strictly increasing for $0 \leq i \leq p$. We have $f(1) = \alpha(I^\vee)$, and moreover $f(p)$ is at most the degree of the least common multiple of all minimal monomial generators of $I^\vee$, which is less than or equal to $n$. Therefore, we get $\pd(S/I^\vee) \leq n-\height(I) +1$. This implies that $n-\height(I) +1 > (\height(I)-1)(\alpha(I)-1)$, and hence $\height(I) < \frac{n}{\alpha(I)} + 1$. This concludes the proof.
\end{proof}

\section{Licci complementary edge ideals}\label{sec:complementary}
Let $G=(V(G),E(G))$ be a finite simple graph on $V(G)=\{x_1,\dots, x_n\}$, where $n\geq 3$. The complementary edge ideal of $G$ is defined as 
\[ I_{c}(G) = \left( x_1\cdots x_n/x_ix_j \mid \{i,j\} \in E(G) \right).\]

\begin{example}
    Let $G$ be the following graph.\\
   \begin{center}
       \begin{tikzpicture}[
    every node/.style={circle, draw, fill=white, minimum size=8mm, font=\large}
]
  \node (a) at (0, 0)  {$x_1$};
  \node (b) at (2, 1)  {$x_2$};
  \node (c) at (2, -1) {$x_3$};
  \node (d) at (-2, 0) {$x_4$};

  \draw (a) -- (b);
  \draw (a) -- (c);
  \draw (b) -- (c);
  \draw (a) -- (d);
\end{tikzpicture}
   \end{center}
   The corresponding complementary edge ideal is 
   \[
   I_c(G)=\left(\frac{x_1x_2x_3x_4}{x_1x_2},\frac{x_1x_2x_3x_4}{x_1x_3},\frac{x_1x_2x_3x_4}{x_1x_4},\frac{x_1x_2x_3x_4}{x_2x_3}\right)=(x_3x_4,x_2x_4,x_2x_3,x_1x_4).\]
\end{example}

Recall that $G$ is called \emph{complete}, in which case $G$ is denoted by $K_n$,  if $\{x_i,x_j\}\in E(G)$ for any $i,j\in [n]$ where $i\neq j$. A graph $G$ is called an \emph{$n$-cycle}, in which case $G$ is denoted by $C_n$, if after a change of labelling on the variables, we have $E(G)=\{\{x_i,x_{i+1}\}\mid i\in [n]\}$,  where we adopt the convention $x_{n+1}=x_1$. A graph $G$ is called a \emph{forest} if it has no cycle subgraph. A connected forest is called a \emph{tree}.

In the following, we recall the Cohen--Macaulay property of complementary edge ideals.

\begin{theorem}[{\cite[Theorem~2.8]{Ficarra-Moradi}}]\label{CMcomplementaryedgeideals}
 Let $G$ be a finite simple graph on $n$ vertices without isolated vertices, where $n\geq 3$ is a positive integer. Then $I_{c}(G)$ is Cohen--Macaulay if and only if $G$ is a complete graph or a forest.
\end{theorem}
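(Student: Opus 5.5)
The plan is to compute the minimal primary decomposition of $I_c(G)$ explicitly, reduce unmixedness to a simple condition on $G$, and then handle the two surviving cases separately: the complete graph directly, and the triangle-free case through Eagon--Reiner and Fr\"oberg.

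\emph{Step 1: minimal primes.} Write $x_{[n]\setminus e}$ for the generator attached to an edge $e$. A prime $(x_j \mid j\in F)$ contains $I_c(G)$ exactly when $F$ meets $[n]\setminus e$ for every edge $e$. Equivalently, $F$ is not contained in any edge of $G$. The minimal such $F$ are then identified as follows.
\begin{enumerate}
\item No singleton works, since $G$ has no isolated vertices.
\item A pair $\{i,j\}$ works precisely when it is a non-edge.
\item A triple is minimal precisely when all three of its pairs are edges, i.e.\ when it spans a triangle.
\item No larger set can be minimal, since any $4$-set contains a non-edge pair or a triangle.
\end{enumerate}
Hence
\[
I_c(G)=\bigcap_{\{i,j\}\notin E(G)}(x_i,x_j)\ \cap\ \bigcap_{\{i,j,k\}\ \text{triangle}}(x_i,x_j,x_k).
\]
Since Cohen--Macaulay ideals are unmixed, $I_c(G)$ Cohen--Macaulay forces one of two cases. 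Either $G$ has no non-edges, so $G=K_n$. Or $G$ has at least one non-edge and no triangles.

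\emph{Step 2: the complete graph.} For $G=K_n$, the ideal $I_c(K_n)$ is generated by all squarefree monomials of degree $n-2$. It is the Stanley--Reisner ideal of the $(n-4)$-skeleton of the $(n-1)$-simplex. Skeleta of a simplex are shellable, hence Cohen--Macaulay. Alternatively, its Alexander dual is generated by all squarefree monomials of degree $3$, and this ideal has a linear resolution. Either argument shows $I_c(K_n)$ is Cohen--Macaulay.

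\emph{Step 3: the triangle-free case.} Here $I_c(G)$ is unmixed of height $2$. Its Alexander dual is
\[
I_c(G)^\vee=(x_ix_j\mid \{i,j\}\notin E(G))=I(\overline{G}),
\]
the edge ideal of the complement graph $\overline G$. By Eagon--Reiner, $I_c(G)$ is Cohen--Macaulay if and only if $I(\overline G)$ has a linear resolution. By Fr\"oberg's theorem, this holds if and only if $\overline{\overline G}=G$ is chordal. A triangle-free chordal graph has no cycles at all, since a shortest cycle would have length at least $4$ and be chordless. So a triangle-free chordal graph is a forest. Conversely, every forest is triangle-free and chordal. This gives the equivalence in this case.

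\emph{Main obstacle and loose ends.} The main obstacle is the bookkeeping in Step 1: one must check carefully that nothing other than non-edges and triangles gives a minimal prime. One must also treat the degenerate overlaps. A forest with no non-edges would be $K_2$, which is excluded by $n\geq 3$. The triangle $K_3$ is both complete and covered by Step 2. Once these points are settled, Steps 2 and 3 are applications of standard results.
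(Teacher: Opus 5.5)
The paper gives no proof of this statement (it is quoted from Ficarra--Moradi, their Theorem~2.8), so your proposal has to stand on its own. It does: it is a correct, complete and characteristic-free proof.

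\begin{itemize}
\item The minimal-prime computation in Step~1 is right. For sets of size $\ge 4$ the quickest reason is that every $3$-subset already qualifies, since no $3$-set lies in an edge.
\item The unmixedness dichotomy is exhaustive.
\item Step~2 works either via the skeleton or via Eagon--Reiner applied to the degree-$3$ squarefree Veronese dual.
\item Step~3 is exactly right: Eagon--Reiner, then Fr\"oberg for $H=\overline G$, then ``triangle-free chordal $=$ forest'' via a shortest, hence chordless, cycle. This includes the check that a forest on $n\ge 3$ vertices is never complete.
\end{itemize}

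Compared with the bare citation, your route also reproves the two other facts the paper imports from Ficarra--Moradi in the proof of Theorem~\ref{thm:licci-complementary}. Step~1 gives $\height I_c(K_n)=3$, and $\height I_c(G)=2$ for $G\ne K_n$ without isolated vertices. Your description of $I_c(K_n)$ as the squarefree Veronese ideal of degree $n-2$ gives its linear resolution at once.

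Your decomposition also makes the isolated-vertex hypothesis transparent. An isolated vertex $x_i$ divides every generator, so $(x_i)$ is a height-one minimal prime. Once $G$ has two edges, $I_c(G)$ is then not unmixed; for example, $I_c(G)=x_1(x_2,x_4)$ for the path $x_2x_3x_4$ plus an isolated vertex $x_1$. So the height-$2$ conclusion really needs that hypothesis, even for forests.
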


As it turns out, most of these ideals are also licci.

\begin{theorem}\label{thm:licci-complementary}
    Let $G$ be a finite simple graph on $V(G)=\{x_1,\dots, x_n\}$, where $n\geq 3$ is a positive integer. Set $S=\Bbbk[V(G)]$ and let $\mathfrak{m}$ denote its homogeneous maximal ideal. Then, $I_{c}(G)_\mathfrak{m}$ is licci in $S_{\mathfrak{m}}$ if and only if $G$ is a triangle ($K_3$) or a forest. 
\end{theorem}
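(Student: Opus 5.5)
Throughout we assume, as in \Cref{CMcomplementaryedgeideals}, that $G$ has no isolated vertices. This assumption is really needed: for the path $x_1x_2x_3$ together with an isolated vertex $x_4$ we get $I_c(G)=x_4(x_1,x_3)$, which is not unmixed. Since licci ideals are Cohen--Macaulay (\Cref{rem:Nagata}), \Cref{CMcomplementaryedgeideals} reduces the problem to two cases: $G$ a forest and $G=K_n$. So it suffices to show that forests and $K_3$ give licci ideals and that $K_n$ with $n\ge 4$ does not. To control heights, I first compute the minimal primes of $I_c(G)$ directly.

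A generator $x_V/(x_ax_b)$ avoids a prime $(x_i\mid i\in F)$ exactly when $F\subseteq\{a,b\}$. Hence:
\begin{itemize}
\item $(x_i)\supseteq I_c(G)$ iff $i$ is isolated, which never happens here;
\item $(x_i,x_j)\supseteq I_c(G)$ iff $\{i,j\}\notin E(G)$;
\item every $(x_i,x_j,x_k)$ contains $I_c(G)$.
\end{itemize}
Therefore the minimal primes are the $(x_i,x_j)$ with $\{i,j\}$ a non-edge, together with the $(x_i,x_j,x_k)$ with $\{i,j,k\}$ a triangle of $G$.

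\emph{Forest case.} A forest on $n\ge 3$ vertices has a non-edge, so $\height I_c(G)=2$. By \Cref{CMcomplementaryedgeideals} the ideal $I_c(G)$ is Cohen--Macaulay, so $I_c(G)_\mathfrak m$ is licci by \Cref{rem:hgt2-and3}(1).

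\emph{Triangle case.} If $G=K_3$, then $I_c(G)=(x_1,x_2,x_3)$ is a complete intersection, hence licci.

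\emph{Complete graph $K_n$ with $n\ge4$.} Here $I=I_c(K_n)$ is generated by all squarefree monomials of degree $n-2$. Every minimal prime has height $3$, and $I^\vee$ is generated by all squarefree monomials of degree $3$. Both $I$ and $I^\vee$ are squarefree Veronese ideals. These are Cohen--Macaulay: for $I$ this follows from \Cref{CMcomplementaryedgeideals}, and for $I^\vee$ it is standard, e.g.\ it is the Stanley--Reisner ideal of a skeleton of a simplex. So $I$ is bi-Cohen--Macaulay of height $3$ and not generated by variables, and \Cref{thm:licci-biCM} shows $I_\mathfrak m$ is not licci.

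As a cross-check via \Cref{lem:not-licci-criteria}, the ideal $I$ has a linear resolution, so
\[
\reg(S/I)=n-3,\qquad \pd(S/I)=3,\qquad \alpha(I)=n-2.
\]
The criterion requires $n-3\le 3(n-3)-(n-2)=2n-7$, which holds precisely when $n\ge4$.

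The only delicate point is the bookkeeping in the minimal-prime computation: confirming that no prime of height $\ge4$ is minimal, and that every forest with $n\ge3$ has height exactly $2$, including the edgeless and isolated-vertex edge cases excluded above. Once that is settled, everything else follows directly from the cited results.
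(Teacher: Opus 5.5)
Restricted to graphs without isolated vertices, your proof is correct and is essentially the paper's. You reduce via \Cref{CMcomplementaryedgeideals} to forests and $K_n$. Forests are handled by height $2$ plus \Cref{rem:hgt2-and3}, $K_3$ is the complete intersection $(x_1,x_2,x_3)$, and $K_n$ with $n\ge 4$ is excluded by the Huneke--Ulrich bound. Routing the $K_n$ case through \Cref{thm:licci-biCM} amounts to the same inequality the paper checks with the linear resolution. Your direct minimal-prime computation simply replaces the citation of Ficarra--Moradi for the height.

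Your added hypothesis is a genuine correction, not a weakness: as literally stated the theorem is false. Take the forest with edges $\{x_1,x_2\},\{x_2,x_3\}$ and isolated vertex $x_4$. Then $I_c(G)=x_4(x_1,x_3)$ is not unmixed, so it is not Cohen--Macaulay and hence not licci. The paper's converse direction asserts $\height I_c(G)=2$ for every non-complete forest and then applies \Cref{rem:hgt2-and3}. That step silently uses the no-isolated-vertex hypothesis of Ficarra--Moradi, both for the height and for Cohen--Macaulayness.

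What you lose relative to the paper is its forward-direction treatment of isolated vertices, which is correct. Suppose $x_i$ is isolated, $G$ has an edge, and $I_c(G)_\mathfrak m$ is licci. Then $(x_i)$ is a minimal prime, so Cohen--Macaulayness forces $I_c(G)$ to be unmixed of height one. Hence $I_c(G)$ is principal, so $G$ has exactly one edge. Conversely, a graph with at most one edge gives $I_c(G)$ zero or principal. Adding this yields the correct full classification: $I_c(G)_\mathfrak m$ is licci if and only if $G=K_3$, or $G$ is a forest without isolated vertices, or $G$ has at most one edge.
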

\begin{proof}
    Suppose $I_{c}(G)_\mathfrak{m}$ is licci. We want to show that $G$ is either $K_3$ or a forest. Note that $I_c(G)$ is Cohen-Macaulay by Remark~\ref{rem:Nagata}. If $G$ has an isolated vertex, say $x_1$, then $(x_1)$ is a minimal prime of $I_c(G)$. Since $I_c(G)$ is Cohen-Macaulay, it is principal by Remark~\ref{rem:height-1}. In particular, this implies that $G$ has one edge, and thus $G$ is a forest, as desired. Now we can assume that $G$ has no isolated vertices. Then    $G$ is either $K_n$ or a forest by Theorem~\ref{CMcomplementaryedgeideals}. It now suffices to show that $I_{c}(K_n)_\mathfrak{m}$ is not licci for any $n\geq 4$. Indeed, we have that $\height(I_{c}(K_n)) = 3$ (\cite[Corollary 2.3]{Ficarra-Moradi}) and $I_{c}(K_n)$ has linear resolution (\cite[Corollary~4.8]{Ficarra-Moradi}). Therefore, by Lemma~\ref{lem:not-licci-criteria}, $I_{c}(K_n)_\mathfrak{m}$ is not licci for any $n\geq 4$, as desired. 
    
    Conversely, suppose that $G$ is $K_3$ or a forest. We want to show that $I_{c}(G)_\mathfrak{m}$ is licci. Indeed, if $G=K_3$, then $I_c(G)=(x_1,x_2,x_3)$, a complete intersection. Now we can assume that $G$ is a forest. Since $G$ is not complete,  $\height (I_{c}(G)) = 2$ (\cite[Corollary 2.3]{Ficarra-Moradi}). Thus, by Remark~\ref{rem:hgt2-and3}, $I_{c}(G)_\mathfrak{m}$ is licci, as desired.   
\end{proof}

\section{Licci path ideals of graphs}\label{sec:path}

In this section, we will characterize the licci $t$-path ideals of $(t-1)$-suspensions of other graphs, and licci $t$-path ideals of cycles. Throughout this section, let $G=(V(G),E(G))$ be a finite simple graph with $V(G)=\{x_1,\dots, x_n\}$, and $t\geq 2$ a positive integer. Recall that a \emph{path of length $t-1$} of $G$ is an ordered sequence $x_{i_1},\dots, x_{i_t}$ of $t$ distinct vertices of $G$ such that $\{x_{i_j},x_{i_{j+1}}\}\in E(G)$ for any $i\in [t-1]$.

The \emph{suspension of length $t-1$} of $G$, denoted by $\Sigma_t G$, is a finite simple graph with the vertex set
\[
V(G)\sqcup \{ x_{ij}\mid i\in [n], j\in [t-1] \}
\]
and the edge set
\[
E(G)\sqcup \{\{x_i,x_{i1}\},\ \{x_{ij},x_{i,j+1}\}\mid i\in [n], j\in [t-2] \}.
\]
Pictorially, $\Sigma_t G$ is the graph $G$ with a path of length $t-1$ attached to each vertex of $G$. The suspension of length $1$ of a graph $G$ is also known as the \emph{whiskered graph} of $G$. 

\begin{example}\label{ex:suspension}
    Let $G$ be a triangle with edges $\{\{x_1,x_2\},\{x_1,x_3\},\{x_2,x_3\}\}$. Below are the pictures of $G, \Sigma_2G$, and $\Sigma_3 G$, respectively.
    \begin{center}
        \begin{tikzpicture}[
    every node/.style={circle, draw, fill=white, minimum size=7mm, font=\normalsize}
]

\node (a1) at (0, 0)    {$x_1$};
\node (b1) at (1.5, 1)  {$x_2$};
\node (c1) at (1.5, -1) {$x_3$};

\draw (a1) -- (b1);
\draw (a1) -- (c1);
\draw (b1) -- (c1);

\node (a2) at (7, 0)    {$x_1$};
\node (b2) at (8.5, 1)  {$x_2$};
\node (c2) at (8.5, -1) {$x_3$};
\node (u2) at (5.5, 0)  {$x_{11}$};
\node (v2) at (10, 1)    {$x_{21}$};
\node (w2) at (10, -1)   {$x_{31}$};

\draw (a2) -- (b2);
\draw (a2) -- (c2);
\draw (b2) -- (c2);
\draw (a2) -- (u2);
\draw (b2) -- (v2);
\draw (c2) -- (w2);

\end{tikzpicture}

\bigskip

\begin{tikzpicture}[
    every node/.style={circle, draw, fill=white, minimum size=7mm, font=\normalsize}
]

\node (a3) at (3, 0)    {$x_1$};
\node (b3) at (4.5, 1)  {$x_2$};
\node (c3) at (4.5, -1) {$x_3$};
\node (u3) at (1.5, 0)  {$x_{11}$};
\node (v3) at (6, 1)    {$x_{21}$};
\node (w3) at (6, -1)   {$x_{31}$};
\node (x3) at (0, 0)    {$x_{12}$};
\node (y3) at (7.5, 1)  {$x_{22}$};
\node (z3) at (7.5, -1) {$x_{32}$};

\draw (a3) -- (b3);
\draw (a3) -- (c3);
\draw (b3) -- (c3);
\draw (a3) -- (u3);
\draw (u3) -- (x3);
\draw (b3) -- (v3);
\draw (v3) -- (y3);
\draw (c3) -- (w3);
\draw (w3) -- (z3);

\end{tikzpicture}
    \end{center}
\end{example}

The \emph{$t$-path ideal} of $G$, denoted by $P_t(G)$, is defined to be the ideal generated by squarefree monomials $x_{i_1}\cdots x_{i_t},$ 
where $x_{i_1},\dots, x_{i_t}$ is a path of length $t-1$ in $G$,
in $\Bbbk[x_i\mid i\in [n]]$. If $G$ has no path of length $t-1$, we adopt the convention that $P_t(G)=(0)$. The $2$-path ideals are also known as \emph{edge ideals}.

\begin{example}\label{ex:suspension-2}
    Let $G$ be as in Example~\ref{ex:suspension}. We then compute $P_3(G)$, $P_3(\Sigma_2G)$, and $P_3(\Sigma_3G)$. This computation is the same as determining all paths of length $2$ in the three graphs. We have
    \begin{align*}
        P_3(G)&=(x_1x_2x_3),\\
        P_3(\Sigma_2G)&=P_3(G)+(x_1x_2x_{11},x_1x_3x_{11},x_1x_2x_{21},x_1x_3x_{31},x_2x_3x_{21},x_2x_3x_{31}),\\
        P_3(\Sigma_3G)&=P_3(\Sigma_2G)+(x_1x_{11}x_{12},x_2x_{21}x_{22},x_3x_{31}x_{32}).
    \end{align*}
\end{example}

It is known to experts that the edge ideals of whiskered graphs are Cohen--Macaulay \cite{Villarreal}, and the same holds for the $t$-path ideals of $(t-1)$-suspension of graphs. We will include the details for completion (see Lemma~\ref{lem:depol-Artinian}, Corollary~\ref{cor:CM-path-ideals}).

For the rest of this section, we set
\begin{align*}
    R&=\Bbbk[V(G)]=\Bbbk[x_i\mid i\in [n]],\\
    T&=\Bbbk[V(\Sigma_tG)]=\Bbbk[x_i, x_{ij}\mid i\in [n],j\in [t-1]].
\end{align*}

Assume that $P_t(\Sigma_t G)$ is generated by squarefree monomials $m_1,\dots, m_q$ in $T$. By setting the variables in $\{x_{ij}\mid j\in [t-1]\}$ to $x_i$ for each $i$, we obtain the corresponding monomials $m_1',\dots, m_q'$ in $R$, which we denote by $\depol(m_1),\dots, \depol(m_q)$, correspondingly. The monomial ideal
\[
(\depol(m_1),\dots, \depol(m_q))\subseteq R
\]
is called a \emph{depolarization} of $P_t(\Sigma_t G)$, denoted by $\depol(P_t(\Sigma_t G))$. The reason for the name is due to an operation called \emph{polarization} (see, for example, \cite[Section 3.2]{MillerSturmfels}) and the fact that $P_t(\Sigma_t G)$ is a polarization of $\depol(P_t(\Sigma_t G))$ (\cite[Proof of Theorem 3.8]{path-ideals-CM}).

\begin{example}
    Let $G$ be as in Example~\ref{ex:suspension}. Recall from Example~\ref{ex:suspension-2} that we have
    \begin{multline*}
          P_3(\Sigma_3G)=(x_1x_2x_3)+(x_1x_2x_{11},x_1x_3x_{11},x_1x_2x_{21},x_1x_3x_{31},x_2x_3x_{21},x_2x_3x_{31})+\\
          (x_1x_{11}x_{12},x_2x_{21}x_{22},x_3x_{31}x_{32}).
    \end{multline*}
    By our definition, we have
    \[
    \depol(P_3(\Sigma_3G))=(x_1x_2x_3)+(x_1^2x_2,x_1^2x_3,x_1x_2^2,x_1x_3^2,x_2^2x_3,x_2x_3^2)+(x_1^3,x_2^3,x_3^3).
    \]
\end{example}

We recall some properties regarding depolarization.

\begin{lemma}[\protect{\cite[Corollary 1.6.3]{HerzogHibibook}}]\label{lem:polarization-properties}
    Let $G$ be a finite simple graph  and $t\geq 2$ a positive integer. The following holds.
\begin{enumerate}[{label=\rm (\alph*)}]
        \item $\pd (R/\depol(P_t(\Sigma_t G))) = \pd( T/P_t(\Sigma_t G))$;
        \item $\reg( R/\depol(P_t(\Sigma_t G))) = \reg( T/P_t(\Sigma_t G))$;
        \item $R/\depol(P_t(\Sigma_t G)) $ is Cohen--Macaulay if and only if   so is $ T/P_t(\Sigma_t G)$.
    \end{enumerate}
\end{lemma}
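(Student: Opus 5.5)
The plan is to check that $P_t(\Sigma_t G)$ is a polarization of $\depol(P_t(\Sigma_t G))$ and then quote the standard invariance of graded Betti numbers under polarization, as in \cite[Corollary 1.6.3]{HerzogHibibook}. Write $I=\depol(P_t(\Sigma_t G))\subseteq R$ and $J=P_t(\Sigma_t G)\subseteq T$. The first step is to identify the generators of $J$ and describe their images. A path on $t$ vertices in $\Sigma_t G$ either avoids the pendant paths, in which case it is a path of $G$ and its depolarization is itself, or it uses a segment $x_i, x_{i1},\dots,x_{ik}$ of some whisker. Such a segment lies at one end of the path, and since $t$ vertices are used, at most one whisker can be entered non-trivially at each end. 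The depolarization of the corresponding monomial is then $x_i^{k+1}$ times the product of the remaining vertices of $G$ on the path.

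The second step is to show that $J$ coincides, after renaming variables, with the standard polarization of $I$. Under the polarization, $x_i^{a}$ is sent to $x_i x_{i1}\cdots x_{i,a-1}$. Each whisker contributes variables $x_{i1},\dots,x_{i,k}$ in exactly this initial-segment form. A path may enter whiskers at both ends only at distinct vertices, so no two whiskers at the same vertex occur and every exponent is encoded correctly. Hence each generator of $J$ is exactly the polarization of its depolarization. Conversely, every minimal generator of $I$ arises in this way.

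Care is needed on two points:
\begin{itemize}
\item Polarization must be applied to the \emph{minimal} generators of $I$. Non-minimal depolarized monomials do not matter, since $J$ is generated by polarizations of all the depolarized monomials. Because polarization preserves divisibility among monomials, the polarization of the minimal generators of $I$ generates the same ideal as the polarization of all of them.
\item The exponent of $x_i$ in a generator never exceeds $t$, which matches the $t-1$ extra variables $x_{i1},\dots,x_{i,t-1}$.
\end{itemize}

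Once $J=I^{\mathrm{pol}}$ is established, the new variables $x_i - x_{ij}$ form a regular sequence on $T/J$, and $T/J$ modulo this sequence is $R/I$. Hence the graded Betti numbers agree, $\beta^T_{i,j}(T/J)=\beta^R_{i,j}(R/I)$. Parts (a) and (b) follow immediately. For (c), both rings have the same projective dimension by (a). Their heights also agree, since quotienting by a regular sequence of length $r$ lowers the dimension of both the ambient ring and the quotient by $r$. Cohen--Macaulayness is equivalent to $\pd=\height$, so (c) follows as well.

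The main obstacle is the combinatorial identification of $J$ with the polarization of $I$, in particular verifying that divisibility is preserved so that minimality behaves correctly. The rest is the standard citation.
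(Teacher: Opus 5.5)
Your proposal is correct and follows the same route as the paper. Both arguments identify $P_t(\Sigma_t G)$ as the polarization of its depolarization and then use \cite[Corollary 1.6.3]{HerzogHibibook}: graded Betti numbers, hence $\pd$, $\reg$, height and Cohen--Macaulayness, are unchanged by polarization. The one difference is that you verify the polarization identity directly, while the paper simply cites \cite[Proof of Theorem 3.8]{path-ideals-CM} for it. The key point in your check is that a $t$-vertex path meeting the whisker at $x_i$ must contain $x_i,x_{i1},\dots,x_{ik}$ as a terminal segment. This holds because $\{x_{i1},\dots,x_{i,t-1}\}$ has only $t-1$ vertices and is joined to the rest of the graph only through $x_i$.
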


We recall the following.

\begin{lemma}[\protect{\cite[Proof of Theorem 3.8]{path-ideals-CM}}]\label{lem:depol-Artinian}
    Let $G$ be a finite simple graph with $V(G)=\{x_1,\dots, x_n\}$, for some $n\geq 1$, and $t\geq 2$ a positive integer. Then $x_i^t\in \depol(P_t(\Sigma_t G))$ for any $i\in [n]$. In particular, $R/\depol(P_t(\Sigma_t G))$ is Artinian, and hence Cohen--Macaulay.
\end{lemma}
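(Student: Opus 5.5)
The plan is to exhibit, for each $i\in[n]$, an explicit path of length $t-1$ in $\Sigma_t G$ whose depolarization is $x_i^t$. The natural candidate is the whisker path attached to $x_i$:
\[
x_i,\ x_{i1},\ x_{i2},\ \dots,\ x_{i,t-1}.
\]
By the definition of $\Sigma_t G$, the pairs $\{x_i,x_{i1}\}$ and $\{x_{ij},x_{i,j+1}\}$ for $j\in[t-2]$ are edges. The $t$ vertices are pairwise distinct, so this is a path of length $t-1$. Hence $m=x_ix_{i1}\cdots x_{i,t-1}\in P_t(\Sigma_tG)$.

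Next I must check that $\depol(m)=x_i^t$ lies in $\depol(P_t(\Sigma_tG))$. This takes a little care, because the depolarization is defined using the minimal generators $m_1,\dots,m_q$. If $m$ is itself a minimal generator, then $\depol(m)=x_i^t$ is one of the listed generators and we are done. Otherwise $m$ is divisible by some minimal generator $m_k$, and then $\depol(m_k)$ divides $\depol(m)$, since depolarization is multiplicative and monotone under divisibility. Either way, $x_i^t\in\depol(P_t(\Sigma_tG))$. (In fact $m$ is always a generator, since all generators have degree $t$ and the ideal is generated in a single degree, but the divisibility argument makes this unnecessary.)

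For the "in particular" part, the ideal $\depol(P_t(\Sigma_tG))$ contains a power of every variable of $R$. It is therefore $\mathfrak m$-primary, so $R/\depol(P_t(\Sigma_tG))$ has Krull dimension $0$, i.e., it is Artinian. A zero-dimensional Noetherian ring has depth $0$ equal to its dimension, so it is Cohen--Macaulay.

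No step here is a real obstacle. The only point needing care is the one handled above: confirming that the depolarization of a non-minimal element still lies in the ideal generated by the depolarized minimal generators.
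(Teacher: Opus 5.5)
Your proof is correct and is essentially the paper's argument: the whisker path $x_i,x_{i1},\dots,x_{i,t-1}$ gives the generator $x_i\prod_{j=1}^{t-1}x_{ij}$ of $P_t(\Sigma_tG)$, whose depolarization $x_i^t$ therefore lies in $\depol(P_t(\Sigma_tG))$. Hence the ideal is $\mathfrak{m}$-primary and the quotient is Artinian, hence Cohen--Macaulay; your extra check that depolarization respects divisibility (so the generating set used does not matter) is a harmless refinement of a point the paper takes for granted.
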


\begin{proof}
    Since for each $i\in [n]$, the monomial $x_i\prod_{j=1}^{t-1}x_{ij}$ is a monomial generator in $P_t(\Sigma_t G)$, its depolarization
    \[
    \depol\left(x_i\prod_{j=1}^{t-1}x_{ij}\right)=x_i^t
    \]
    is in $\depol(P_t(\Sigma_t G))$. Therefore $\sqrt{\depol(P_t(\Sigma_tG))} = (x_1,\dots, x_n)$. This concludes the proof.
\end{proof}

The following follows immediately from Lemmas~\ref{lem:polarization-properties} and \ref{lem:depol-Artinian}.

\begin{corollary}\label{cor:CM-path-ideals}
    Let $G$ be a finite simple graph and $t\geq 2$ a positive integer. Then $T/P_t(\Sigma_t G)$ is Cohen--Macaulay.\qed
\end{corollary}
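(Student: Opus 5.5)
The statement follows directly from Lemma~\ref{lem:polarization-properties} and Lemma~\ref{lem:depol-Artinian}; the plan is to combine them with no further work.

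First, Lemma~\ref{lem:depol-Artinian} says that $x_i^t\in\depol(P_t(\Sigma_t G))$ for every $i\in[n]$. Hence $\sqrt{\depol(P_t(\Sigma_t G))}=(x_1,\dots,x_n)$, the homogeneous maximal ideal of $R$. It follows that $\dim R/\depol(P_t(\Sigma_t G))=0$. A zero-dimensional Noetherian (graded) ring is Cohen--Macaulay, since its depth and dimension are both $0$. So $R/\depol(P_t(\Sigma_t G))$ is Cohen--Macaulay.

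Second, apply Lemma~\ref{lem:polarization-properties}(c). It asserts that $R/\depol(P_t(\Sigma_t G))$ is Cohen--Macaulay if and only if $T/P_t(\Sigma_t G)$ is Cohen--Macaulay. The left-hand side holds by the first step, so $T/P_t(\Sigma_t G)$ is Cohen--Macaulay, as claimed.

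The only step that needs care is confirming that Lemma~\ref{lem:polarization-properties}(c) really applies, i.e.\ that $P_t(\Sigma_t G)$ is exactly the polarization of $\depol(P_t(\Sigma_t G))$. This is what the cited source (\cite[Proof of Theorem 3.8]{path-ideals-CM}) provides, and Lemma~\ref{lem:polarization-properties} is stated under that setup. I would therefore take it as given. The underlying fact is the standard one: polarization preserves projective dimension and codimension, so it preserves the Cohen--Macaulay property.
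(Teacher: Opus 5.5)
Your proposal is correct and is exactly the paper's argument: Lemma~\ref{lem:depol-Artinian} shows that $R/\depol(P_t(\Sigma_t G))$ is Artinian and hence Cohen--Macaulay. Lemma~\ref{lem:polarization-properties}(c) then transfers this to $T/P_t(\Sigma_t G)$, and like the paper you take the polarization identification from the proof of Theorem 3.8 in \cite{path-ideals-CM}.
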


The $(t-1)$-suspension of graphs therefore forms a large class of graphs whose $t$-path ideals are Cohen--Macaulay.

We recall the formula for the regularity of an Artinian monomial ideal.

\begin{lemma}[\protect{\cite[Exercise 20.18]{Eis-book}}]\label{lem:formula-reg-Artinian}
    Let $I$ be an Artinian monomial ideal in a polynomial ring $S$ with the homogeneous maximal ideal $\mathfrak{m}$. Then $\reg(S/I)= \max \{\deg (f) \mid f \text{ is a monomial in } (I:_S\mathfrak{m})/I\}$. 
\end{lemma}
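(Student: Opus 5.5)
The plan is the standard argument via the socle. Write $I$ for the Artinian monomial ideal and set $d=\dim S$. Since $S/I$ is Artinian, it is Cohen--Macaulay of dimension $0$, so $\pd(S/I)=d$. By \Cref{lem:reg-CM}, $\reg(S/I)=\max\{j-d \mid \beta_{d,j}(S/I)\neq 0\}$. It therefore suffices to identify the last module in the minimal free resolution, that is, the degrees $j$ with $\beta_{d,j}\neq 0$, and show that they are exactly $\deg(f)+d$ as $f$ runs over the monomials in the socle $(I:_S\mathfrak m)/I$.

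First, I would compute $\operatorname{Tor}_d^S(S/I,\Bbbk)$ using the Koszul complex $K(x_1,\dots,x_d)$ resolving $\Bbbk$. Since Tor is balanced,
\[
\operatorname{Tor}_d^S(S/I,\Bbbk)\cong H_d\bigl(K(x_1,\dots,x_d)\otimes S/I\bigr).
\]
The top Koszul homology is $\{u\in S/I \mid x_iu=0 \text{ for all } i\}$, shifted by degree $d$, because the top term is $S/I(-d)$. Hence, as graded vector spaces,
\[
\operatorname{Tor}_d^S(S/I,\Bbbk)\cong \bigl((I:_S\mathfrak m)/I\bigr)(-d).
\]
It follows that $\beta_{d,j}(S/I)\neq 0$ exactly when the socle has a nonzero element of degree $j-d$.

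Next, I would observe that because $I$ is monomial, $(I:_S\mathfrak m)$ is also a monomial ideal. Consequently the socle is spanned by the images of the monomials $f\in (I:\mathfrak m)\setminus I$, and its nonzero graded pieces occur exactly in the degrees $\deg f$ of such monomials. Combining this with the previous step gives $\reg(S/I)=\max\{\deg f\}$ over these monomials.

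An alternative route that avoids homological algebra is to note that for an Artinian graded module, $\reg(S/I)$ equals the top nonzero degree of $S/I$. This top degree is attained by a socle element, and every socle element lies in degree at most the top degree. I do not expect a real obstacle in either route; the only care needed is keeping the shift by $d$ in the Koszul identification straight.
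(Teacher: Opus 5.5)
Your argument is correct. The paper gives no proof here; it only cites \cite[Exercise 20.18]{Eis-book}, so there is no competing route to compare against. What you supply is a complete justification relative to \Cref{lem:reg-CM}. It rests on the graded isomorphism $\operatorname{Tor}_d^S(S/I,\Bbbk)\cong \bigl((I:_S\mathfrak m)/I\bigr)(-d)$ coming from the top Koszul homology, with the shift by $d$ handled correctly. It also uses that $(I:_S\mathfrak m)=\bigcap_i (I:_S x_i)$ is a monomial ideal, so the socle has a $\Bbbk$-basis given by the images of the monomials in $(I:_S\mathfrak m)\setminus I$.

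Your route actually yields more than the lemma, namely $\beta_{d,j}(S/I)=\#\{f \text{ monomial in } (I:_S\mathfrak m)\setminus I \mid \deg f=j-d\}$. The monomial hypothesis enters only when you phrase socle degrees in terms of monomials.

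One small correction to how you frame the alternative route: it does not really avoid homological algebra. The fact that the regularity of a finite-length graded module $M$ equals its top nonzero degree is itself proved homologically. One way is local cohomology. Another is to note that $H_i(x_1,\dots,x_d;M)_j$ is a subquotient of $\bigwedge^i \Bbbk^d\otimes_{\Bbbk} M_{j-i}$, which bounds $j-i$ by the top degree without even invoking \Cref{lem:reg-CM}.
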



We next study the regularity of  $\depol(P_t(\Sigma_t G))$. Recall that $G$ is called an \emph{$n$-star} if after a relabelling of vertices, we have $E(G)=\{ \{x_1,x_2\}, \{x_1,x_3\},\dots, \{x_1,x_n\} \}$.

\begin{theorem}\label{thm:reg-inequality-equivalent}
    Let $G$ be a finite simple graph with $V(G)=\{x_1,\dots, x_n\}$, for some $n\geq 1$, and $t\geq 2$ a positive integer. Then $G$ is a star graph with possible isolated vertices if and only if
    \[
    \reg \left(T/P_t(\Sigma_t G)\right) > (t-1)n-t.
    \]
\end{theorem}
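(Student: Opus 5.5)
Throughout, set $I=\depol(P_t(\Sigma_tG))\subseteq R$. By Lemma~\ref{lem:polarization-properties}(b), $\reg(T/P_t(\Sigma_tG))=\reg(R/I)$. By Lemma~\ref{lem:depol-Artinian}, $I$ is Artinian. So Lemma~\ref{lem:formula-reg-Artinian} gives that $\reg(R/I)$ is the largest degree of a monomial $u\notin I$ with $x_ku\in I$ for all $k\in[n]$. The whole proof is then an exponent count on such socle monomials.

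\emph{Generators of $I$.} A path of length $t-1$ in $\Sigma_tG$ consists of a path $x_{i_1},\dots,x_{i_s}$ in $G$ with $s\ge1$, extended by $a\ge0$ whisker vertices hanging at $x_{i_1}$ and $b\ge 0$ at $x_{i_s}$, where $s+a+b=t$. Its depolarization is $x_{i_1}^{a+1}x_{i_2}\cdots x_{i_{s-1}}x_{i_s}^{b+1}$, and is $x_i^t$ when $s=1$. Two consequences will be used:
\begin{itemize}[label={}]
\item[(i)] every generator of $I$ other than the $x_i^t$ involves at least two adjacent vertices of $G$, all of them joined by a path in $G$;
\item[(ii)] for each edge $\{x_i,x_j\}$ and each $1\le a\le t-1$, we have $x_i^ax_j^{t-a}\in I$.
\end{itemize}
For a monomial $u=\prod x_i^{e_i}\notin I$, each $e_i\le t-1$. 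Define the deficit $d_i=t-1-e_i\ge0$, so that $\deg u=(t-1)n-\sum_i d_i$.

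\emph{Star implies the inequality.} Suppose $G$ has no edges. Then $I=(x_1^t,\dots,x_n^t)$ and the socle degree is $(t-1)n>(t-1)n-t$. Now suppose $G$ is a star with center $x_1$ plus isolated vertices. I take $u=\prod_{i\ne1}x_i^{t-1}$. By (i), every non-pure-power generator involves the center $x_1$, and $u$ has no $x_1$, so $u\notin I$. Moreover $x_iu\ni x_i^t$ for each $i\neq 1$. For the center, pick any leaf $x_j$; then $x_1u$ is divisible by $x_1x_j^{t-1}\in I$ by (ii). Thus $u$ is a socle monomial of degree $(t-1)(n-1)>(t-1)n-t$.

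\emph{Converse.} If $G$ is not a star with isolated vertices, then its edges do not share a common vertex, so $G$ contains either two disjoint edges or a triangle. Let $u\notin I$ be a socle monomial. The key observation comes from (ii): for any edge $\{x_i,x_j\}$, if $e_i,e_j\ge1$ then $e_i+e_j\le t-1$, and if one exponent is $0$ then its deficit is already $t-1$. Either way, $d_i+d_j\ge t-1$.
\begin{itemize}[label={}]
\item[(a)] \emph{Two disjoint edges.} These give $\sum d\ge 2(t-1)\ge t$.
\item[(b)] \emph{Triangle $x_i,x_j,x_k$.} If some exponent is zero, say $e_k=0$, then $d_k=t-1$ and the edge $\{x_i,x_j\}$ contributes another $t-1$, so again the deficit is at least $2(t-1)\ge t$. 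If all three exponents are positive, the pairwise sums are at most $t-1$, so $e_i+e_j+e_k\le 3(t-1)/2$ and the deficit is at least $3(t-1)/2$. This is $\ge t$ when $t\ge3$. When $t=2$, positive exponents with pairwise sums $\le1$ are impossible, so this case does not occur.
\end{itemize}
Hence every socle degree is at most $(t-1)n-t$, and $\reg(T/P_t(\Sigma_tG))\le (t-1)n-t$.

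The main point needing care is correctly identifying the generators of $I$, especially (i), since it is what guarantees $u\notin I$ in the star case. The triangle subcase with all exponents positive is the only place the counting is tight.
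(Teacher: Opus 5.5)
Your proof is correct and follows essentially the same route as the paper. You pass to the Artinian depolarization, bound socle degrees using $x_i^a x_j^{t-a}\in \depol(P_t(\Sigma_tG))$ for each edge in the two-disjoint-edges and triangle cases, and exhibit the socle monomial $\prod_{i\neq 1}x_i^{t-1}$ (omitting the center) for stars. The differences are cosmetic: the deficit bookkeeping, the zero-exponent case split (unneeded, since $(x_i,x_j)^t\subseteq I$ already forces $e_i+e_j\le t-1$), and treating $t=2$ in the triangle case by impossibility rather than the paper's integrality argument.
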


\begin{proof}
    Due to Lemma~\ref{lem:polarization-properties}, we can replace $P_t(\Sigma_t G)$ with its depolarization. Recall that the ideal $\depol(P_t(\Sigma_t G))$ is Cohen--Macaulay and Artinian (Lemma~\ref{lem:depol-Artinian}), and thus
    \[
    \pd \left(R/\depol(P_t(\Sigma_t G)\right) = \height  (\depol(P_t(\Sigma_t G))) = n.
    \] 
    Next we analyze the regularity. By Lemma~\ref{lem:formula-reg-Artinian}, we have $\reg \left(T/\depol(P_t(\Sigma_t G))\right)=\deg (F)$ for some monomial $F$ in $\depol(P_t(\Sigma_t G))\colon _R (x_i\mid i\in [n])$ but not in $\depol(P_t(\Sigma_t G))$. Set $F=x_1^{a_1}\cdots x_n^{a_n}$. For each $i\in [n]$, since $x_i^t\in \depol(P_t(\Sigma_t G))$ (see Proof of Lemma~\ref{lem:depol-Artinian}), we have  $a_i\leq t-1$. We have the following claim.
    \begin{claim}\label{clm:1-edge}
        If  $\{x_1,x_2\}$ is an edge of $G$, then $(x_1,x_2)^t\subseteq \depol(P_t(\Sigma_t G))$. Consequently, we have $a_1+a_2\leq t-1$.
    \end{claim}
    \begin{proof}[Proof of Claim~\ref{clm:1-edge}]
        Since  $\{x_1,x_2\}$ is an edge of $G$, the graph $\Sigma_t G$ contains the following subgraph
        \begin{center}
            \begin{tikzpicture}[
    every node/.style={circle, draw, fill=white, minimum size=7mm, font=\normalsize}
]

\node (xtm1) at (0, 1)   {$x_{1,t-1}$};
\node (xdots1) [draw=none, fill=none] at (2, 1) {$\cdots$};
\node (x2)    at (3.5, 1) {$x_{12}$};
\node (x1)    at (5, 1)   {$x_{11}$};
\node (x0)    at (6.5, 1) {$x_1$};

\draw (xtm1) -- (xdots1);
\draw (xdots1) -- (x2);
\draw (x2) -- (x1);
\draw (x1) -- (x0);

\node (y0)    at (6.5, -1) {$x_2$};
\node (y1)    at (5, -1)   {$x_{21}$};
\node (y2)    at (3.5, -1) {$x_{22}$};
\node (ydots1) [draw=none, fill=none] at (2, -1) {$\cdots$};
\node (ytm1)  at (0, -1)  {$x_{2,t-1}$};

\draw (y0) -- (y1);
\draw (y1) -- (y2);
\draw (y2) -- (ydots1);
\draw (ydots1) -- (ytm1);

\draw (x0) -- (y0);

\end{tikzpicture}
        \end{center}        
        In particular, the monomials 
        \[
        x_1\left(\prod_{j=1}^{t-1} x_{1j}\right), \quad x_2\left(\prod_{j=1}^{t-1} x_{2j}\right), \quad   x_1x_2\left(\prod_{j=1}^k x_{1j} \right)\left(\prod_{j=1}^{t-2-k}x_{2j}\right),
        \]
        where $k\in [t-2]\cup \{0\}$, are minimal monomial generators of $P_t(\Sigma_t G)$. Therefore, their depolarization
        \[
        x_1^t,\quad x_2^t, \quad x_1^{k+1}x_2^{t-1-k}, 
        \]
        where $k\in [t-2]\cup \{0\}$, are minimal monomial generators of $\depol(P_t(\Sigma_t G))$. In other words, $(x_1,x_2)^t\subseteq \depol(P_t(\Sigma_t G))$. Therefore $f\notin \depol(P_t(\Sigma_t G))$ implies that $a_1+a_2\leq t-1$, as desired.
    \end{proof}

    Back to the main proof, suppose that $G$ is not a star graph with possible isolated vertices. Equivalently (we leave this straightforward exercise to the readers), $G$ either contains two disjoint edges or a triangle as a subgraph. Assume that $G$ contains two disjoint edges. Without loss of generality, assume that $\{x_1,x_2\},\{x_3,x_4\}$ are edges of $G$. Then by Claim~\ref{clm:1-edge}, we have $a_1+a_2\leq t-1$ and $a_3+a_4\leq t-1$. Therefore, we have
    \begin{align*}
        \sum_{i=1}^n a_i = (a_1+a_2)+(a_3+a_4) + \sum_{i=5}^n a_i &\leq (t-1)+(t-1) + \sum_{i=5}^n (t-1) \\
        &= (t-1)(n-2)\\
        &=(t-1)n - t - (t-2)\\
        &\leq (t-1)n -t,
    \end{align*}
   as desired. Now we can assume that $G$ contains a triangle. Without loss of generality, assume that $\{x_1,x_2\},\{x_2,x_3\},\{x_1,x_3\}$ are edges of $G$.

By Claim~\ref{clm:1-edge}, we have the three inequalities:
        \[
        a_1+a_2 \leq t-1, \quad a_2+a_3 \leq t-1, \quad a_1+a_3 \leq t-1.
        \]
        Therefore $a_1+a_2+a_3\leq \frac{3}{2}(t-1)$. We thus have
        \begin{align*}
            \sum_{i=1}^n a_i = (a_1+a_2+a_3) + \sum_{i=4}^n a_i &\leq \frac{3}{2}(t-1) + \sum_{i=4}^n (t-1)\\
            &=(t-1)(n-\frac{3}{2})\\
            &=(t-1)n - t - \frac{1}{2}(t-3)\\
        \end{align*}
        {If $t \geq 3$, we have $\sum_{i=1}^n a_i \leq (t-1)n - t$. If $t=2$, then we have $\sum_{i=1}^n a_i \leq (t-1)n-t + \frac{1}{2}$. Since $\sum_{i=1}^n a_i$ and $(t-1)n-t$ are integers, we get $\sum_{i=1}^n a_i \leq (t-1)n - t$,}
        as desired. In either case, we have
    \[
    \reg \left(R/\depol(P_t(\Sigma_t G))\right) \leq (t-1)n-t,
    \]
    as desired.

    Conversely, assume that $G$ is a star graph with possible isolated vertices. Without loss of generality, assume that
    \[
    E(G)=\{ x_ix_k \mid i\in [k-1] \}
    \]
    for some $k\in [n]$. If $k=1$ or $n=1$, i.e., $G$ is $n$ isolated vertices, then it is straightforward that
    \[
    \reg \left(R/\depol(P_t(\Sigma_t G))\right) = \reg \left(\Bbbk[x_i\mid i\in [n]] / (x_i^t\mid i\in [n])\right) = tn-n>  (t-1)n-t,
    \]
    as desired. Now assume that $k,n\geq 2$. We have the following claim.
    \begin{claim}\label{clm:socle-element}
        We have 
        \[
        (x_1\cdots \widehat{x_k}\cdots x_n)^{t-1}\in \left( \depol(P_t(\Sigma_t G))\colon _R (x_i\mid i\in [n]) \right)\setminus \depol(P_t(\Sigma_t G)).
        \]
        In particular, $\reg \left(R/\depol(P_t(\Sigma_t G))\right) \geq (t-1)(n-1)$.
    \end{claim}
    \begin{proof}[Proof of Claim~\ref{clm:socle-element}]
        The second statement follows from the first statement and Lemma~\ref{lem:formula-reg-Artinian}. 
        We have
        \[
        x_i (x_1\cdots \widehat{x_k}\cdots x_n)^{t-1} = x_i^t (\cdots) \in \depol(P_t(\Sigma_t G)) 
        \]
        for any $i\in [n]\setminus \{k\}$, by Lemma~\ref{lem:depol-Artinian}. On the other hand, since $x_k,x_1,x_{11},\dots, x_{1,t-2}$ form a path of length $t-1$ in $\Sigma_t G$, we have
        \[
        x_1^{t-1}x_k = \depol(x_k x_1x_{11}\cdots x_{1,t-2}) \in \depol(P_t(\Sigma_t G)),
        \]
        which implies that 
        \[
        x_k (x_1\cdots \widehat{x_k}\cdots x_n)^{t-1} = x_1^{t-1}x_k (\cdots) \in \depol(P_t(\Sigma_t G)).
        \]
        Therefore, we have
        \[
        (x_1\cdots \widehat{x_k}\cdots x_n)^{t-1}\in \left( \depol(P_t(\Sigma_t G))\colon _R (x_i\mid i\in [n]) \right).
        \]
        Moreover, due to the structure of $\Sigma_t G$, any path of length $t-1$ in it must either be \[\{x_j,x_{j,1},\dots, x_{j,t-1}\}\]
        for some $j\in [n]\setminus \{k\}$, or contain $x_k$. In other words, we have
        \[
        (x_1\cdots \widehat{x_k}\cdots x_n)^{t-1}\notin \depol(P_t(\Sigma_t G)),
        \]
        as desired.
    \end{proof}

    The desired inequality then follows immediately from Claim~\ref{clm:socle-element}. This concludes the proof.
\end{proof}

We are now in a position to prove the main theorem of the section, a partial characterization of licci $t$-path ideals, assuming that the graphs are $(t-1)$-suspension (of some other graphs).

\begin{theorem}\label{thm:licci-path-ideals-suspension}
    Let $G$ be a finite simple graph and  $t\geq 2$ be a positive integer. Let $\mathfrak{m}$ denote the homogeneous maximal ideal of $T=\Bbbk[V(\Sigma_tG)]$. Consider the following statements.
\begin{enumerate}[label={\rm(\arabic*)}]
        \item $P_t(\Sigma_t G)_{\mathfrak{m}}$ is licci in $T_\mathfrak{m}$.
        \item $G$ is a star graph with possible isolated vertices.
        \item Either $t=2 $ and $G$ is a star graph with possible isolated vertices, or $t>2$ and $G$ has at most one edge.
    \end{enumerate}
    Then $(3) \Longrightarrow (1) \Longrightarrow (2)$. Moreover, if $\Bbbk$ is infinite, then $(1)\implies (3)$.
\end{theorem}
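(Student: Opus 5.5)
The plan is to prove $(1)\Rightarrow(2)$ from the Huneke--Ulrich regularity obstruction, and to handle $(3)\Rightarrow(1)$ and $(1)\Rightarrow(3)$ by passing to the Artinian depolarization and running the algorithm of Theorem~\ref{thm-Huneke-Ulrich}.

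\textbf{$(1)\Rightarrow(2)$.} Suppose $P_t(\Sigma_tG)_\mathfrak{m}$ is licci. By Corollary~\ref{cor:CM-path-ideals}, $T/P_t(\Sigma_tG)$ is Cohen--Macaulay. All generators of $P_t(\Sigma_tG)$ have degree $t$, so $\alpha(P_t(\Sigma_tG))=t$. By Lemma~\ref{lem:polarization-properties} and Lemma~\ref{lem:depol-Artinian}, $\pd(T/P_t(\Sigma_tG))=n$. The contrapositive of Lemma~\ref{lem:not-licci-criteria} then gives $\reg(T/P_t(\Sigma_tG))>(t-1)n-t$. By Theorem~\ref{thm:reg-inequality-equivalent}, $G$ is a star with possible isolated vertices.

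\textbf{Transfer to the depolarization.} The elements $x_{ij}-x_i$ form a regular sequence of linear forms on $T$ and on $T/P_t(\Sigma_tG)$, and the quotient by them is $R/\depol(P_t(\Sigma_tG))$. I would invoke the Huneke--Ulrich results on linkage modulo regular sequences. In one direction, if the specialization $\depol(P_t(\Sigma_tG))_\mathfrak{m}$ is licci, then so is $P_t(\Sigma_tG)_\mathfrak{m}$. In the other direction, licci-ness passes to the specialization; I expect this is where the hypothesis that $\Bbbk$ is infinite enters, through generic links. This reduces both remaining implications to Artinian monomial ideals, to which Theorem~\ref{thm-Huneke-Ulrich} applies.

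\textbf{$(3)\Rightarrow(1)$.} Split into cases.
\begin{itemize}
\item If $G$ has no edges, $P_t(\Sigma_tG)$ is generated by the monomials $x_i\prod_j x_{ij}$, which have disjoint supports. So it is a complete intersection.
\item If $G$ has exactly one edge $\{x_1,x_2\}$, the ideal is a sum, in disjoint sets of variables, of complete intersection generators and the $t$-path ideal of a path on $2t$ vertices. By Claim~\ref{clm:1-edge}, the latter depolarizes to $(x_1,x_2)^t$, which is Cohen--Macaulay of height $2$. Hence the path ideal is Cohen--Macaulay of height $2$ and is licci by Remark~\ref{rem:hgt2-and3}. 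Adding a regular sequence in new variables preserves licci.
\item If $t=2$ and $G$ is a star with center $x_k$, then $\depol=(x_1^2,\dots,x_n^2)+x_k(x_1,\dots,x_{k-1})$. One step of the Huneke--Ulrich algorithm gives $(x_1,\dots,x_k,x_{k+1}^2,\dots,x_n^2)$, a complete intersection. So the depolarization is licci by Theorem~\ref{thm-Huneke-Ulrich}, and the polarization is licci by the transfer above. Alternatively, this case can be cited directly from the Kimura--Terai--Yoshida classification of licci edge ideals.
\end{itemize}

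\textbf{$(1)\Rightarrow(3)$ for $\Bbbk$ infinite.} By $(1)\Rightarrow(2)$, $G$ is a star. It remains to show that for $t\ge3$ and a star with at least two edges, $\depol(P_t(\Sigma_tG))$ is not licci. Using the transfer, it suffices to exhibit some $k$ with $(I^{\{k\}})^{\#}$ of height $\ge2$. Restrict to two edges $x_1x_k$, $x_2x_k$; the extra leaves and isolated vertices should only contribute pure powers and behave similarly. The generators are:
\begin{itemize}
\item the pure powers $x_i^t$;
\item $x_i^{t-a}x_k^{a}$ for $1\le a\le t-1$, coming from a leaf $x_i$;
\item $x_1^{a}x_kx_2^{b}$ for $a+b=t-1$ with $a,b\ge1$, coming from paths through the center.
\end{itemize}
Every non-pure generator is divisible by $x_k$, so $\mathbf{x}^B=x_k$ and $I^{\{1\}}=(x_1^t,x_2^t,x_k^{t-1})+K$. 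Here $K$ contains $x_1^{t-1}$, $x_2^{t-1}$, the mixed monomials $x_1^ax_2^b$, and the terms $x_k^{a-1}x_i^{t-a}$.

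The main obstacle is tracking this iteration. I expect $I^{\{1\}}$ to simplify to roughly $(x_1,x_2,x_k)^{t-1}$ minus some monomials. At the next stage the non-pure part should have trivial gcd and height $\ge2$, for instance by containing both $x_1^{t-2}x_k$ and $x_2^{t-2}x_k$ together with monomials free of $x_k$. I would first compute the cases $t=3,4$ explicitly to find the stage where this happens, and then generalize the pattern.
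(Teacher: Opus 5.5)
Your $(1)\Rightarrow(2)$ is the paper's argument. For $(1)\Rightarrow(3)$ you also take the paper's route: pass to the Artinian depolarization (the paper uses the Faridi--Klein--Rajchgot--Seceleanu comparison, valid for infinite $\Bbbk$) and then run the algorithm of \Cref{thm-Huneke-Ulrich}.

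\textbf{The gap.} You never finish that algorithm. You stop at ``I expect'' and plan to compute $t=3,4$ to find the stage where the obstruction appears. In fact $I^{\{1\}}$ already settles it. Write $I$ for the depolarization and $x_k$ for the center. You correctly found $\mathbf{x}^B=x_k$. Every generator of $K$ has degree $t-1$, as does $x_k^{t-1}$, and every other generator of $I^{\{1\}}$ is a pure power of degree $t$. Hence every non-pure monomial of degree $t-1$ in $I^{\{1\}}$ is a minimal generator and so lies in $(I^{\{1\}})^{\#}$. For $t\ge 3$ and two leaves $x_1,x_2$, the monomials $x_1^{t-2}x_k$, $x_2^{t-2}x_k$ and $x_1x_2^{t-2}$ lie in $K$ and are not pure powers. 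Their gcd is $1$, so $(I^{\{1\}})^{\#}$ has height at least $2$. Thus criterion (ii) of \Cref{thm-Huneke-Ulrich} fails at $k=1$; equivalently $I^{\{s\}}=I^{\{1\}}\neq R$ for all $s\ge 1$. This is exactly how the paper concludes. For $t=3$ it is transparent: $I^{\{1\}}$ is $(x_1,x_2,x_k)^2$ plus pure powers of the other variables. This also confirms that the threshold in (3) is $t=2$.

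\textbf{The ``restrict to two edges'' step.} This is not a justified reduction. Extra leaves $x_i$ contribute the mixed generators $x_i^{l}x_k^{t-l}$ and $x_i^{l}x_kx_{i'}^{t-1-l}$, not just pure powers, and you give no principle transferring licci-ness to such a restriction. It is also unnecessary: the gcd argument above works verbatim with any number $\ge 2$ of leaves and any number of isolated vertices.

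\textbf{The one-edge case of $(3)\Rightarrow(1)$.} Your argument is correct and differs from the paper's. The nontrivial summand is the $t$-path ideal of the path on $2t$ vertices, which is $\Sigma_t$ of a single edge and polarizes $(x_1,x_2)^t$. It is therefore Cohen--Macaulay of height $2$, hence licci by \Cref{rem:hgt2-and3}. Adding the remaining generators, which are monomials in disjoint new variables, extends each link. The paper instead constructs explicit double links $I_k\sim\cdot\sim I_{k-2}$ down to $I_1$ or $I_2$. Your route is shorter and avoids the colon computations.

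\textbf{The case $t=2$.} Citing Kimura--Terai--Yoshida, as the paper does, is cleaner than your alternative. The alternative relies on the lifting direction of your transfer (depolarization licci implies polarization licci), which you attribute only loosely to Huneke--Ulrich.
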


\begin{proof}   
    Set $V(G)=\{x_1,\dots, x_n\}$, for some $n\geq 1$.
    
    \emph{(1)$\implies$ (2).} This follows from Lemma~\ref{lem:not-licci-criteria} and Theorem~\ref{thm:reg-inequality-equivalent}.  

    \emph{(3)$\implies$ (1).} The case $t=2$ follows from \cite[Theorem~3.7]{Kimura-Terai-Yoshida}. We can now assume that $t\geq 3$ and $G$ has at most one edge. If $G$ has no edge, then $P_t(\Sigma_t G)$ is a complete intersection, and thus the result follows. Now without loss of generality, assume that $E(G)=\{ x_1,x_2 \}$. 
    Set $x_{i0}\coloneqq x_i$ for each $i\in [n]$. It is straightforward that 
    \begin{multline*}
        P_t(\Sigma_t G) =
         \left(\left(\prod_{j=0}^{l-1} x_{1j} \right)\left(\prod_{j=0}^{t-1-l}x_{2j}\right)\ \middle\vert \  l\in [t]\cup \{0\} \right) +  \left(\prod_{j=0}^{t-1}x_{ij}\ \middle\vert \  i\in [n]\setminus [2] \right).
    \end{multline*}
   Set
   \begin{align*}
        I&\coloneqq P_t(\Sigma_t G),\\
        J&\coloneqq \left(\prod_{j=0}^{t-1}x_{ij}\ \middle\vert \  i\in [n] \right),\\
        I'&\coloneqq \left(\left(\prod_{j=1}^{l-1} x_{1j} \right)\left(\prod_{j=1}^{t-1-l}x_{2j}\right)\ \middle\vert \  l\in [t-1] \right) +  \left(\prod_{j=0}^{t-1}x_{ij}\ \middle\vert \  i\in [n]\setminus [2] \right),\\
        J'&\coloneqq \left(\prod_{j=1}^{t-1}x_{ij}\ \middle\vert \  i\in [2] \right)+ \left(\prod_{j=0}^{t-1}x_{ij}\ \middle\vert \  i\in [n]\setminus [2] \right).
    \end{align*}

    We have the following claim.

    \begin{claim}\label{clm:links}
        The two ideals $I$ and $I'$ are linked.
    \end{claim}
    \begin{proof}[Proof of Claim~\ref{clm:links}]
        It is clear that $J$ (resp, $J'$) is a complete intersection in $I$ (resp, $I'$) with the same height. It now suffices to show that $J\colon I=J'\colon I'$. Note that $\mathbf{a}\colon \mathbf{b}=\mathbf{a}\colon (\mathbf{b},f)$ for any two ideals $\mathbf{a}$ and $\mathbf{b}$ and any element $f\in \mathbf{a}$. Thus we have
        \begin{align*}
            J\colon I &= \left(\prod_{j=0}^{t-1}x_{ij}\ \middle\vert \  i\in [n] \right) \colon \left(\left(\prod_{j=0}^{l-1} x_{1j} \right)\left(\prod_{j=0}^{t-1-l}x_{2j}\right)\ \middle\vert \  l\in [t-1] \right) \\
            &= \left(\prod_{j=0}^{t-1}x_{ij}\ \middle\vert \  i\in [n] \right) \colon x_{10}x_{20}\left(\left(\prod_{j=1}^{l-1} x_{1j} \right)\left(\prod_{j=1}^{t-1-l}x_{2j}\right)\ \middle\vert \  l\in [t-1] \right)\\
            &=\left( \left(\prod_{j=1}^{t-1}x_{ij}\ \middle\vert \  i\in [2] \right)  + \left(\prod_{j=0}^{t-1}x_{ij}\ \middle\vert \  i\in [n]\setminus [2] \right) \right)\colon \left(\left(\prod_{j=1}^{l-1} x_{1j} \right)\left(\prod_{j=1}^{t-1-l}x_{2j}\right)\ \middle\vert \  l\in [t-1] \right)\\
            &=J'\colon I',
        \end{align*}
        as desired.
    \end{proof}
    For each $k\in [t]$, set
    \[
    I_k\coloneqq \left(\left(\prod_{j=0}^{l-1} x_{1j} \right)\left(\prod_{j=0}^{k-1-l}x_{2j}\right)\ \middle\vert \  l\in [k]\cup \{0\} \right) +  \left(\prod_{j=0}^{t-1}x_{ij}\ \middle\vert \  i\in [n]\setminus [2] \right).
    \]
    Observe that $I_t=I$, and $I_{t-2}$ is exactly $I'$ after a change of variables. Therefore, Claim~\ref{clm:links} implies that $(I_t)_\mathfrak{m}$ is licci if and only if so is $(I_{t-2})_\mathfrak{m}$. In fact, with similar arguments, one can argue that 
    \[
    (I_{k})_\mathfrak{m} \text{ is licci if and only if so is } (I_{k-2})_\mathfrak{m}
    \]
    for any $k\in [t]\setminus [2]$. Therefore, it suffices to show that $(I_{1})_\mathfrak{m}$ and $(I_{2})_\mathfrak{m}$ are licci. Indeed, $I_{1}$ itself is a complete intersection, while we have
    \begin{align*}
        &\ \ \ \ \left(\left(x_{10}x_{11}, \ x_{20}x_{21} \right)+ \left(\prod_{j=0}^{t-1}x_{ij}\ \middle\vert \  i\in [n]\setminus [2] \right) \right)\colon I_2\\
        &=\left(\left(x_{10}x_{11}, \ x_{20}x_{21} \right)+ \left(\prod_{j=0}^{t-1}x_{ij}\ \middle\vert \  i\in [n]\setminus [2] \right) \right) \colon \left( x_{10}x_{20} \right)\\
        &= \left(x_{11},  \ x_{21} \right)+ \left(\prod_{j=0}^{t-1}x_{ij}\ \middle\vert \  i\in [n]\setminus [2] \right), 
    \end{align*}
    a complete intersection, as desired.

    \emph{(3) $\Longleftarrow$ (1), assuming that $\Bbbk$ is infinite.} Since (1) implies (2), $G$ is a star graph with possible isolated vertices. It therefore suffices to show that if $t>2$ and $G$ is a star graph with at least two edges and possible isolated vertices, then $P_t(\Sigma_t G)_{\mathfrak{n}}$ is not licci in $T_\mathfrak{n}$. Set $E(G)=\{x_1x_r,\dots, x_{r-1}x_r\}$ for some $3\leq r\leq n$, and $x_{i0}\coloneqq x_i$ for each $i\in [n]$.
    
    Since $\Bbbk$ is infinite, by \cite[Remark 3.13]{polarization-glicci}, $P_t(\Sigma_t G)_{\mathfrak{n}}$ is licci in $T_\mathfrak{n}$ if and only if  $\depol(P_t(\Sigma_t G))_{\mathfrak{n}}$ is licci in $R_\mathfrak{m}$. By Lemma~\ref{lem:depol-Artinian}, the latter is an Artinian ideal, and by Theorem~\ref{thm-Huneke-Ulrich}   it therefore suffices to show that $\depol(P_t(\Sigma_t G))^{\{s\}}\neq R$ for any $s\geq 2$. Indeed, we have
    \begin{multline*}
        P_t(\Sigma_t G) = \left(\left(\prod_{j=0}^{l-1} x_{ij} \right)\left(\prod_{j=0}^{t-1-l}x_{rj}\right)\ \middle\vert \  i\in [r-1],\ l\in [t]\cup \{0\} \right)+\\
        \left(\left(\prod_{j=0}^{l-1} x_{ij} \right)(x_{r0})\left(\prod_{j=0}^{t-2-l}x_{i'j}\right)\ \middle\vert \ i,i'\in [r-1] \text{ where }i<i',  l\in [t-2] \right) +  \left(\prod_{j=0}^{t-1}x_{ij}\ \middle\vert \  i\in [n]\setminus [r] \right)
    \end{multline*}
    and hence
    \begin{align*}
        &\ \ \ \  \depol(P_t(\Sigma_t G)\\ &= \begin{multlined}[t]
            \left( x_{i}^{l} x_{r}^{t-l} \ \middle\vert \  i\in [r-1],\ l\in [t]\cup \{0\} \right)+
        \left( x_{i}^{l} x_{i'}^{t-1-l}x_r \ \middle\vert \ i,i'\in [r-1] \text{ where }i<i', ,  l\in [t-2] \right) + \\
        \left(x_{i}^t\ \middle\vert \  i\in [n]\setminus [r] \right).
        \end{multlined}\\
        &= \begin{multlined}[t]
            \left(x_{i}^t\ \middle\vert \  i\in [n] \right)+ \left( x_{i}^{l} x_{r}^{t-l} \ \middle\vert \  i\in [r-1],\ l\in [t-1] \right)+\\
        \left( x_{i}^{l} x_{i'}^{t-1-l}x_r \ \middle\vert \ i,i'\in [r-1]\text{ where }i<i',  l\in [t-2] \right)
        \end{multlined}\\
        &= \begin{multlined}[t]
            \left(x_{i}^t\ \middle\vert \  i\in [n] \right)+ x_r\Big(\left( x_{i}^{l} x_{r}^{t-1-l} \ \middle\vert \  i\in [r-1],\ l\in [t-1] \right)+\\
        \left( x_{i}^{l} x_{i'}^{t-1-l} \ \middle\vert \ i,i'\in [r-1] \text{ where }i<i',  l\in [t-2] \right)\Big).
        \end{multlined}
    \end{align*}
    We then have
    \begin{align*}
        &\ \ \ \  I\coloneqq \depol(P_t(\Sigma_t G)^{\{1\}})\\
        &=  \begin{multlined}[t]
            \left(x_{i}^t\ \middle\vert \  i\in [n]\setminus \{r\} \right) + (x_r^{t-1})+ \left( x_{i}^{l} x_{r}^{t-1-l} \ \middle\vert \  i\in [r-1],\ l\in [t-1] \right)+\\
        \left( x_{i}^{l} x_{i'}^{t-1-l} \ \middle\vert \ i,i'\in [r-1] \text{ where }i<i',  l\in [t-2] \right)
        \end{multlined}\\
        &=  \begin{multlined}[t]
            \left(x_{i}^{t-1}\ \middle\vert \  i\in [r] \right)+\left(x_{i}^{t}\ \middle\vert \  i\in [n]\setminus [r]\right) + \left( x_{i}^{l} x_{r}^{t-1-l} \ \middle\vert \  i\in [r-1],\ l\in [t-2] \right)+\\
        \left( x_{i}^{l} x_{i'}^{t-1-l} \ \middle\vert \ i,i'\in [r-1] \text{ where }i<i',  l\in [t-2] \right).
        \end{multlined}
    \end{align*}
    Since $r\geq 3$, the monomial minimal generators of $I^{\#}$ have no common factor. Thus by definition, we have
    \[
    \depol(P_t(\Sigma_t G)^{\{s\}} = I^{\{s-1\}}= I^{\{s-2\}} =\cdots = I^{\{1\}}= I\neq R
    \]
    for any $s\geq 2$, as desired. This concludes the proof.
\end{proof}

As a corollary, we obtain all trees whose path ideals are licci.

\begin{corollary}
    Let $G$ be a tree and $t\geq 2$ be a positive integer. Let $\mathfrak{m}$ denote the homogeneous maximal ideal of $R=\Bbbk[V(G)]$.
    If $\Bbbk$ is infinite, then $P_t(G)_\mathfrak{m}$ is licci in $R_{\mathfrak{m}}$ if and only if $G$ is a path of length $t-1$ or $2t-1$.
\end{corollary}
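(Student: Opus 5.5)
The plan is to reduce the corollary to Theorem~\ref{thm:licci-path-ideals-suspension}, using the Cohen--Macaulay classification of path ideals of trees. First, if $P_t(G)_\mathfrak{m}$ is licci, then $P_t(G)$ is Cohen--Macaulay by Remark~\ref{rem:Nagata}. I would then invoke the characterization of Cohen--Macaulay $t$-path ideals of trees from \cite{path-ideals-CM}, whose proof of Theorem~3.8 is where the suspension construction comes from. I expect it to say that, for a tree $G$ containing a path of length $t-1$, the ideal $P_t(G)$ is Cohen--Macaulay if and only if $G=\Sigma_t H$ for some tree $H$. I also have to dispose of the degenerate case where $G$ has no path of length $t-1$: then $P_t(G)=(0)$, which is not licci in the intended sense, so it is excluded. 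I have to get the precise form of the cited result right, since the whole argument rests on it.

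Once $G=\Sigma_t H$ with $H$ a tree (so $H$ is connected and has no isolated vertices unless $H$ is a single vertex), I apply Theorem~\ref{thm:licci-path-ideals-suspension}. Because $\Bbbk$ is infinite, (1)$\Rightarrow$(3) holds. For $t>2$ this forces $H$ to have at most one edge. Being connected, $H$ is then either a single vertex or a single edge. In the first case $G=\Sigma_t H$ is a path on $t$ vertices, i.e.\ of length $t-1$. In the second case it consists of two paths of length $t-1$ joined at their roots, i.e.\ a path on $2t$ vertices, of length $2t-1$.

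For the converse, a path of length $t-1$ is $\Sigma_t$ of a point. Its path ideal is then principal (a complete intersection), so it is licci. A path of length $2t-1$ is $\Sigma_t$ of an edge, which is licci by (3)$\Rightarrow$(1) of Theorem~\ref{thm:licci-path-ideals-suspension}. Both directions involve a change of rings between $\Bbbk[V(G)]$ and $T$, but these are the same polynomial ring, so nothing is lost.

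The main obstacle I anticipate is the case $t=2$. There, condition (3) allows $H$ to be any star, and whiskered stars with at least two edges have licci edge ideals by \cite{Kimura-Terai-Yoshida}. Such a graph is a tree that is not a path, which seems to contradict the corollary as stated. I would first re-check this against Theorem~3.7 of \cite{Kimura-Terai-Yoshida}, for example on $\Sigma_2$ of the path with three vertices. If the tension persists, I would run the argument above for $t\geq 3$ and state the $t=2$ case separately as ``$G$ is the whiskered graph of a star.'' The second point to verify carefully is that the cited Cohen--Macaulay classification really yields exactly suspensions of trees, including the convention about short paths.
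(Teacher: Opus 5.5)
For $t\geq 3$ your argument is exactly the paper's proof, and it is correct: licci $\Rightarrow$ Cohen--Macaulay (Remark~\ref{rem:Nagata}) $\Rightarrow$ $G=\Sigma_tH$ by \cite[Theorem~3.8]{path-ideals-CM}; then (1)$\Rightarrow$(3) of Theorem~\ref{thm:licci-path-ideals-suspension} plus connectedness of $H$, and (3)$\Rightarrow$(1) for the converse. You were right to use the body's version of condition (3) (``$t>2$ and at most one edge'') rather than the introduction's (``$t\le 3$ and a star''). The body version is the correct one. For $t=3$ and $H=K_{1,2}$ with centre $x_3$, the depolarization is $(x_1^3,x_2^3,x_3^3)+x_3(x_1^2,x_2^2,x_1x_2,x_1x_3,x_2x_3)$. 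Its $\{1\}$-transform is $\mathfrak m^2\subseteq\Bbbk[x_1,x_2,x_3]$, and $(\mathfrak m^2)^{\{1\}}=\mathfrak m^2\neq R$, so it is not licci.

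Your worry about $t=2$ does not need re-checking: it is a genuine counterexample to the statement. The paper's proof has exactly this oversight. Its last sentence invokes Theorem~\ref{thm:licci-path-ideals-suspension}, but for $t=2$ condition (3) admits every star $H$, so one cannot conclude that $G$ is a path.

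Concretely, let $H=K_{1,2}$ with centre $x_3$, let $G=\Sigma_2H$, and write $y_i=x_{i1}$. Then $G$ is a tree in which $x_3$ has degree $3$, and $I(G)=(x_1x_3,x_2x_3,x_1y_1,x_2y_2,x_3y_3)$. With the regular sequence $C=(x_1y_1,x_2y_2,x_3y_3)$ one gets
\[
C\colon I(G)=(y_1,x_2y_2,y_3)\cap(x_1y_1,y_2,y_3)=(y_3)+(x_1y_1,\,y_1y_2,\,y_2x_2).
\]
This is a variable plus the height-two Cohen--Macaulay edge ideal of a path on four vertices in the remaining variables, hence licci. So $I(G)_\mathfrak m$ is licci, even though $G$ is not a path of length $1$ or $3$. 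This is just the implication (3)$\Rightarrow$(1) of the theorem at $t=2$.

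So commit to the fix you propose. The corollary holds for $t\geq 3$. For $t=2$ the licci trees are exactly the whiskered stars $\Sigma_2H$, with $H$ a star including $K_1$ and $K_2$. There the implications (3)$\Rightarrow$(1)$\Rightarrow$(2) of the theorem already coincide, so the infinite-field hypothesis is not even needed.

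One smaller point concerns the degenerate case. The ideal $(0)$ is generated by the empty regular sequence, so it is a complete intersection and hence licci. Trees with no path on $t$ vertices (e.g.\ a single vertex) are therefore not excluded by the licci condition. You should exclude them by an explicit hypothesis such as $P_t(G)\neq 0$ rather than calling $(0)$ not licci; the paper also omits this.
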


\begin{proof}
    If $G$ is a path of length $t-1$ or $2t-1$, then $G=\Sigma_tH$ where $H$ is one isolated vertex or an edge, respectively. Thus by Corollary~\ref{cor:CM-path-ideals}, $P_t(G)$ is Cohen-Macaulay. On the other hand, $P_t(G)_\mathfrak{m}$ being licci also implies that $P_t(G)$ is Cohen-Macaulay. Therefore, we can assume that $P_t(G)$ is Cohen-Macaulay in the first place. By \cite[Theorem~3.8]{path-ideals-CM}, $G$  must be $\Sigma_t H$ for some graph $H$. The result then follows from Theorem~\ref{thm:licci-path-ideals-suspension}.
\end{proof}

We fell short of fully characterizing licci path ideals. On the other hand, it is known that $P_2(G)_\mathfrak{m}$ being licci implies that $G$ is a unicyclic graph (graphs with at most one cycle) (\cite[Theorem 3.7]{Kimura-Terai-Yoshida}). We predict that the full characterization of licci path ideals will be a similar result. Towards that end, we also characterize licci path ideals of cycles. In the following remark, we recall the known results about the regularity and projective dimension of path ideals of cycles.

\begin{remark}[{\cite[Corollary 5.5]{AlilooeeFaridi}}]\label{rem:reg-for-cycle} Let $n, t, q$ and $d$ be integers such that $2 \leq t \leq n$, $n = (t + 1)q + d$, where $q \geq 0$, $0 \leq d \leq t$. Then we have

$$\pd(S/P_t(C_n)) = \begin{cases}
    2q+1 & {\text {if}\ \ } d\neq 0\\
    2q & {\text {if}\ \ } d=0. 
\end{cases} $$

and 

$$\reg(S/P_t(C_n)) = \begin{cases}
    (t-1)q+d-1 & {\text {if}\ \ } d\neq 0\\
    (t-1)q & {\text {if}\ \ } d=0. 
\end{cases} $$
\end{remark}

We are now ready to characterize licci path ideals of cycles. 

\begin{theorem}
     Let $n\geq t\geq 2$ be integers, and $S=\Bbbk[x_1, \ldots, x_n]$. Then the ideal $P_t(C_n)_{\mathfrak{m}}$ is  licci in $S_{\mathfrak{m}}$ if and only if $n \in \{ t+1, 2t+1\}$. 
\end{theorem}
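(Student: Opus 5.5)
The plan is to prove necessity with the Huneke--Ulrich regularity obstruction (Lemma~\ref{lem:not-licci-criteria}), using the explicit invariants of Remark~\ref{rem:reg-for-cycle}, and sufficiency with Remark~\ref{rem:hgt2-and3}. Throughout, $\alpha(P_t(C_n))=t$, since every minimal generator has degree $t$.

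\emph{Necessity.} Suppose $P_t(C_n)_\mathfrak{m}$ is licci. By Remark~\ref{rem:Nagata}, $S/P_t(C_n)$ is Cohen--Macaulay, so Lemma~\ref{lem:not-licci-criteria} applies. Its contrapositive gives
\[
\reg(S/P_t(C_n)) > (t-1)\pd(S/P_t(C_n)) - t .
\]
Write $n=(t+1)q+d$ as in Remark~\ref{rem:reg-for-cycle}; since $n\geq t$, we have $q\geq 1$ unless $n=t$.
\begin{itemize}
\item If $d=0$, the inequality reads $(t-1)q>2q(t-1)-t$, that is, $(t-1)q<t$. This forces $q=1$, so $n=t+1$.
\item If $d\neq 0$, the inequality reads $(t-1)q+d-1>(t-1)(2q+1)-t$, that is, $d>(t-1)q$. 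Since $d\leq t$, either $q=0$ (so $n=d=t$), or $q=1$ and $d=t$, which gives $n=2t+1$.
\end{itemize}
The leftover case $n=t$ is degenerate. There $P_t(C_t)=(x_1\cdots x_t)$ is principal, hence trivially licci. The statement therefore has to be read with $n>t$, or with the convention that this case is treated separately, and I would remark on this explicitly.

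\emph{Sufficiency.} The two cases use the two parts of Remark~\ref{rem:hgt2-and3}.
\begin{itemize}
\item For $n=t+1$: every $t$ consecutive vertices of $C_{t+1}$ omit exactly one vertex, so $P_t(C_{t+1})$ is the squarefree Veronese ideal generated by all $x_1\cdots\widehat{x_i}\cdots x_{t+1}$. Its minimal primes are $(x_i,x_j)$, so it has height $2$. By Remark~\ref{rem:reg-for-cycle} with $q=1$, $d=0$, we get $\pd=2$, so it is Cohen--Macaulay. Hence it is licci by Remark~\ref{rem:hgt2-and3}(1).
\item For $n=2t+1$: here $q=1$ and $d=t$, so $\pd=3$. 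For the height, a vertex cover must meet every block of $t$ consecutive vertices on a cycle of length $2t+1$. Hence consecutive chosen vertices are at most $t$ apart, so at least $3$ vertices are needed. Three vertices at mutual gaps at most $t$ suffice, so $\height=3=\pd$, and $S/P_t(C_{2t+1})$ is Cohen--Macaulay. By Remark~\ref{rem:hgt2-and3}(2), it then suffices to show the ideal is Gorenstein, i.e.\ that the last Betti number is $\beta_3=1$.
\end{itemize}

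\emph{Main obstacle.} The hard step is proving that $P_t(C_{2t+1})$ is Gorenstein.

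My first attempt would use Lemma~\ref{lem:reg-CM} together with the regularity $\reg=(t-1)+t-1=2t-2$ from Remark~\ref{rem:reg-for-cycle}. This gives that the last module of the minimal free resolution lives in degree $2t+1$. For a squarefree monomial ideal, a top-degree shift equal to $n$ can only occur in the single multidegree $x_1\cdots x_{2t+1}$. It remains to show no other shifts occur in homological degree $3$, so that $\beta_3=1$.
\begin{itemize}
\item One route is Hochster's formula. The multidegree-$(1,\dots,1)$ component of $\beta_3$ is one-dimensional: it is a reduced homology group of a complex on the cycle, which is homotopic to a circle, so it has exactly one nonzero group, of rank one. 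Then one must rule out strictly smaller multidegrees $W$ with $\widetilde H(\Delta_W)\neq 0$ in the relevant degree. I would check this using that proper induced subgraphs of the cycle are paths, whose path-ideal complexes are contractible in the relevant range.
\item Alternatively, one can realize $P_t(C_{2t+1})$ as the ideal of submaximal Pfaffians of a $(2t+1)\times(2t+1)$ skew-symmetric circulant monomial matrix and invoke Buchsbaum--Eisenbud.
\end{itemize}
If both routes stall, a fallback is an explicit link, in the spirit of Claim~\ref{clm:links}. The complete intersection to use is generated by three generators placed at mutual gaps at most $t$. I would compute the colon ideal directly and aim to show it is a height-$3$ ideal already known to be licci.
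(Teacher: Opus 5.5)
Your necessity computation, the case $n=t+1$, and the height and projective-dimension count for $n=2t+1$ match the paper's proof. Your caveat about $n=t$ is also justified: the paper's own proof declares $P_t(C_t)=(x_1\cdots x_t)$ licci, so the statement should be read for $n\geq t+1$.

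The genuine gap is the step you flag yourself: you never establish that $P_t(C_{2t+1})$ is Gorenstein. The paper does not prove this either; it cites \cite[Theorem 4.13]{AlilooeeFaridiBettiNumbers} for $\beta_3(S/P_t(C_{2t+1}))=1$. Two of your three routes do not close the gap as written.

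\emph{The linkage fallback cannot work.} Every monomial of $P_t(C_{2t+1})$ is divisible by some $t$-arc. For $t\geq 2$, three pairwise disjoint $t$-arcs do not fit into $2t+1$ vertices. So the ideal contains no three pairwise coprime monomials, i.e.\ no monomial regular sequence of length $3$.

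\emph{The Hochster sketch conflates complexes.} By Hochster's formula, the top multidegree contributes $\widetilde H_{2t-3}(\Delta)$, where $\Delta$ is the $(2t-3)$-dimensional Stanley--Reisner complex. Since this group must be nonzero, $\Delta$ is not a circle once $t\geq 3$. The complex that is homotopic to a circle is the Alexander dual $\Delta^\vee$, whose facets are the cyclic intervals of $t+1$ vertices, via $\beta_{3,[n]}(S/I)=\dim\widetilde H_1(\Delta^\vee)$. That homotopy equivalence still needs an argument. Also, proper restrictions correspond to disjoint unions of paths, not paths, and their complexes are not contractible: a single $t$-arc gives the boundary of a simplex. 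What you actually need is the vanishing of $\widetilde H_{|W|-4}(\Delta_W)$. That requires a K\"unneth argument plus the top-multidegree Betti numbers of path ideals of lines.

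\emph{The Pfaffian route works} and gives a self-contained replacement for the citation.
\begin{enumerate}
\item Index the vertices by $\mathbb{Z}/(2t+1)$. Let $M$ be the alternating matrix with $M_{k,k+t}=-M_{k+t,k}=x_{k-1}$ for every $k$, and all other entries $0$.
\item Since $\gcd(t,2t+1)=1$, the support graph $k\sim k+t$ is a single $(2t+1)$-cycle. Deleting row and column $i$ therefore leaves a path on $2t$ vertices.
\item That path has the unique perfect matching $\{i+(2s-1)t,\ i+2st\}$ for $1\leq s\leq t$.
\item Using $2t\equiv -1$, the entry on the $s$-th matched pair is $x_{i+(2s-1)t-1}=x_{i+t-s}$. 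Hence the submaximal Pfaffian is $\pm x_ix_{i+1}\cdots x_{i+t-1}$.
\item So $P_t(C_{2t+1})$ is the ideal of $2t\times 2t$ Pfaffians of $M$. You have shown it has height $3$, so the Buchsbaum--Eisenbud theorem makes it Gorenstein of codimension $3$.
\item Remark~\ref{rem:hgt2-and3}(2) then finishes the proof. This route does not even need $\pd=3$ from Remark~\ref{rem:reg-for-cycle}.
\end{enumerate}
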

\begin{proof}
    If $n=t$, then $P_t(C_n)=(x_1\cdots x_n)$ is principal, and hence $P_t(C_n)_\mathfrak{m}$ licci, as desired. Now we can assume that $n\geq t+1$. We first prove that if $n \not\in \{ t+1, 2t+1\}$, then the ideal $P_t(C_n)_{\mathfrak{m}}$ is not licci in $S_{\mathfrak{m}}$. Let $n \not\in \{t+1, 2t+1\}$, and $n = (t + 1)q + d$, where $q \geq 0$, $0 \leq d \leq t$. Since $n\geq t+1$, we have $q\geq 1$. We have the following claim.
    \begin{claim}\label{clm:cycles}
        We have $(t-1)q\geq d$.
    \end{claim}
    \begin{proof}[Proof of Claim~\ref{clm:cycles}] 
        If $q\geq 2$, then
        \[
        (t-1)q \geq 2(t-1) \geq t \geq d,
        \]
        as desired. Now we can assume that $q=1$. Since $n\neq 2t+1$, we have $d\neq t$. Note that we have $d\leq t$ from the hypothesis. Thus  $d\leq t-1=(t-1)q$, as desired.
    \end{proof}
    If $d\neq 0$, then by  Remark~\ref{rem:reg-for-cycle}, we get $$(t-1)\pd(S/P_t(C_n)) -t = (t-1)(2q+1) - t =(t-1)q + (t-1)q-1 \geq  (t-1)q + d-1= \reg(S/P_t(C_n)),$$
    where the inequality is due to Claim~\ref{clm:cycles}.
    
  If $d=0$, then $n=(t+1)q$, and since $n\neq t+1$, we have $q\geq 2$. Hence by Remark \ref{rem:reg-for-cycle}, we get
  \begin{align*}
      (t-1)\pd(S/P_t(C_n)) -t = (t-1)(2q) - t =(t-1)q +(t-1)q - t &\geq  (t-1)q +2(t-1)-t\\
      &\geq (t-1)q =\reg(S/P_t(C_n)).
  \end{align*}
  Thus, by Lemma~\ref{lem:not-licci-criteria}, we get that $P_t(C_n)_{\mathfrak{m}}$ is not licci in $S_{\mathfrak{m}}$.

To complete the proof, we need to show that if $n=t+1$ or $2t+1$, then $P_t(C_n)_{\mathfrak{m}}$ is licci in $S_{\mathfrak{m}}$. Suppose $n=t+1$. Then given any $i, j \in \{1, \ldots, n\}$ with $i \neq j$, we have $P_t(C_n) \subseteq (x_i, x_j)$ and $P_t(C_n) \not\subseteq (x_i)$. Hence, $\height(P_t(C_n))\geq 2$. Also, from Remark \ref{rem:reg-for-cycle}, we have $\pd(S/P_t(C_n))=2$. Thus, $S/P_t(C_n))$ is Cohen--Macaulay with $\height(P_t(C_n))=2$. By Remark~\ref{rem:hgt2-and3}, it follows that $P_t(C_n)_{\mathfrak{m}}$ is licci in $S_{\mathfrak{m}}$.

  Suppose $n=2t+1$. Then given any given any $i, j \in \{1, \ldots, n\}$ with $i \neq j$, we have $P_t(C_n) \not\subseteq (x_i, x_j)$. Thus, $\height(P_t(C_n))\geq 3$. By Remark~\ref{rem:reg-for-cycle}, we have $\pd(S/P_t(C_n))=3$. Hence, $P_t(G)$ is Cohen--Macaulay of height $3$. By \cite[Theorem 4.13]{AlilooeeFaridiBettiNumbers}, we have $\beta_{3}(S/P_t(C_n)) =1$. Thus, $P_t(C_n)$ is Gorenstein of height $3$. By Remark~\ref{rem:hgt2-and3}, it follows that $P_t(C_n)_{\mathfrak{m}}$ is licci in $S_{\mathfrak{m}}$.
\end{proof}

\bibliographystyle{amsplain}
\bibliography{sample}

@article {path-ideals-CM,
    AUTHOR = {Campos, Daniel and Gunderson, Ryan and Morey, Susan and
              Paulsen, Chelsey and Polstra, Thomas},
     TITLE = {Depths and {C}ohen-{M}acaulay properties of path ideals},
   JOURNAL = {J. Pure Appl. Algebra},
  FJOURNAL = {Journal of Pure and Applied Algebra},
    VOLUME = {218},
      YEAR = {2014},
    NUMBER = {8},
     PAGES = {1537--1543},
      ISSN = {0022-4049,1873-1376},
   MRCLASS = {05E40 (05C05 13A15 13C14 13F55)},
  MRNUMBER = {3175038},
MRREVIEWER = {Siamak\ Yassemi},
       DOI = {10.1016/j.jpaa.2013.12.005},
       URL = {https://doi.org/10.1016/j.jpaa.2013.12.005},
}

@book {HerzogHibibook,
    AUTHOR = {Herzog, J\"urgen and Hibi, Takayuki},
     TITLE = {Monomial ideals},
    SERIES = {Graduate Texts in Mathematics},
    VOLUME = {260},
 PUBLISHER = {Springer-Verlag London, Ltd., London},
      YEAR = {2011},
     PAGES = {xvi+305},
      ISBN = {978-0-85729-105-9},
   MRCLASS = {13D02 (05E40 13D40 13F55 13P10)},
  MRNUMBER = {2724673},
MRREVIEWER = {Rahim\ Zaare-Nahandi},
       DOI = {10.1007/978-0-85729-106-6},
       URL = {https://doi.org/10.1007/978-0-85729-106-6},
}

@article{HunekeUlrich,
    AUTHOR = {Huneke, Craig and Ulrich, Bernd},
     TITLE = {The structure of linkage},
   JOURNAL = {Ann. of Math. (2)},
  FJOURNAL = {Annals of Mathematics. Second Series},
    VOLUME = {126},
      YEAR = {1987},
    NUMBER = {2},
     PAGES = {277--334},
      ISSN = {0003-486X,1939-8980},
   MRCLASS = {13H10 (13D10 13H15 14B07)},
  MRNUMBER = {908149},
MRREVIEWER = {Matthew\ Miller},
       DOI = {10.2307/1971402},
       URL = {https://doi.org/10.2307/1971402}
}

@article{hibi2025complementaryedgeideals,
      title={Complementary edge ideals}, 
      author={Takayuki Hibi and Ayesha Asloob Qureshi and Sara Saeedi Madani},
      year={2025},
      journal={arXiv:2508.09837 [math.AC]}
}

@book {Bruns-Herzog,
    AUTHOR = {Bruns, Winfried and Herzog, J\"urgen},
     TITLE = {Cohen-{M}acaulay rings},
    SERIES = {Cambridge Studies in Advanced Mathematics},
    VOLUME = {39},
 PUBLISHER = {Cambridge University Press, Cambridge},
      YEAR = {1993},
     PAGES = {xii+403},
      ISBN = {0-521-41068-1},
   MRCLASS = {13H10 (13-02)},
  MRNUMBER = {1251956},
MRREVIEWER = {Matthew\ Miller},
}

@article{ficarra2026stablesetassociatedprimes,
      title={The stable set of associated primes of a complementary edge ideal}, 
      author={Antonino Ficarra},
      year={2026},
      journal={arXiv:2603.02358 [math.AC]}
}

@article {Villarreal,
    AUTHOR = {Villarreal, Rafael H.},
     TITLE = {Cohen-{M}acaulay graphs},
   JOURNAL = {Manuscripta Math.},
  FJOURNAL = {Manuscripta Mathematica},
    VOLUME = {66},
      YEAR = {1990},
    NUMBER = {3},
     PAGES = {277--293},
      ISSN = {0025-2611,1432-1785},
   MRCLASS = {13H10 (05C05 05E25 52B20)},
  MRNUMBER = {1031197},
MRREVIEWER = {Aron\ Simis},
       DOI = {10.1007/BF02568497},
       URL = {https://doi.org/10.1007/BF02568497},
}

@article {Peskine-Szpiro,
    AUTHOR = {Peskine, Christian  and Szpiro, Lucien},
     TITLE = {Liaison des vari\'et\'es alg\'ebriques. {I}},
   JOURNAL = {Invent. Math.},
  FJOURNAL = {Inventiones Mathematicae},
    VOLUME = {26},
      YEAR = {1974},
     PAGES = {271--302},
      ISSN = {0020-9910,1432-1297},
   MRCLASS = {14M10},
  MRNUMBER = {364271},
MRREVIEWER = {G.\ Horrocks},
       DOI = {10.1007/BF01425554},
       URL = {https://doi.org/10.1007/BF01425554},
}

@article {HunekeUlrichMonomial,
    AUTHOR = {Huneke, Craig and Ulrich, Bernd},
     TITLE = {Liaison of monomial ideals},
   JOURNAL = {Bull. Lond. Math. Soc.},
  FJOURNAL = {Bulletin of the London Mathematical Society},
    VOLUME = {39},
      YEAR = {2007},
    NUMBER = {3},
     PAGES = {384--392},
      ISSN = {0024-6093,1469-2120},
   MRCLASS = {13C40 (13F20)},
  MRNUMBER = {2331565},
MRREVIEWER = {N.\ Mohan Kumar},
       DOI = {10.1112/blms/bdl008},
       URL = {https://doi.org/10.1112/blms/bdl008},
}

@incollection {Terai,
    AUTHOR = {Terai, Naoki},
     TITLE = {Alexander duality theorem and {S}tanley-{R}eisner rings},
      NOTE = {Free resolutions of coordinate rings of projective varieties
              and related topics (Japanese) (Kyoto, 1998)},
   JOURNAL = {S\=urikaisekikenky\=usho K\B oky\=uroku},
  FJOURNAL = {S\=urikaisekikenky\=usho K\B oky\=uroku},
    NUMBER = {1078},
      YEAR = {1999},
     PAGES = {174--184},
   MRCLASS = {13F55 (13D02)},
  MRNUMBER = {1715588},
MRREVIEWER = {Rafael\ H.\ Villarreal},
}

@book {Eis-book,
    AUTHOR = {Eisenbud, David},
     TITLE = {Commutative algebra},
    SERIES = {Graduate Texts in Mathematics},
    VOLUME = {150},
      NOTE = {With a view toward algebraic geometry},
 PUBLISHER = {Springer-Verlag, New York},
      YEAR = {1995},
     PAGES = {xvi+785},
      ISBN = {0-387-94268-8; 0-387-94269-6},
   MRCLASS = {13-01 (14A05)},
  MRNUMBER = {1322960},
MRREVIEWER = {Matthew\ Miller},
       DOI = {10.1007/978-1-4612-5350-1},
       URL = {https://doi.org/10.1007/978-1-4612-5350-1},
}

@book {MillerSturmfels,
    AUTHOR = {Miller, Ezra and Sturmfels, Bernd},
     TITLE = {Combinatorial commutative algebra},
    SERIES = {Graduate Texts in Mathematics},
    VOLUME = {227},
 PUBLISHER = {Springer-Verlag, New York},
      YEAR = {2005},
     PAGES = {xiv+417},
      ISBN = {0-387-22356-8},
   MRCLASS = {13-01 (05-01 05E99 13D02 14M15 14M25)},
  MRNUMBER = {2110098},
MRREVIEWER = {Joseph\ Gubeladze},
}

@article {AlilooeeFaridi,
    AUTHOR = {Alilooee, Ali and Faridi, Sara},
     TITLE = {On the resolution of path ideals of cycles},
   JOURNAL = {Comm. Algebra},
  FJOURNAL = {Communications in Algebra},
    VOLUME = {43},
      YEAR = {2015},
    NUMBER = {12},
     PAGES = {5413--5433},
      ISSN = {0092-7872,1532-4125},
   MRCLASS = {13D02 (13F20 13F55)},
  MRNUMBER = {3395713},
MRREVIEWER = {Manoj\ Kummini},
       DOI = {10.1080/00927872.2013.837170},
       URL = {https://doi.org/10.1080/00927872.2013.837170},
}

@article{Ficarra-Moradi,
      title={Complementary edge ideals}, 
      author={Antonino Ficarra and Somayeh Moradi},
      year={2025},
      journal={arXiv:2508.10870 [math.AC]}
}

@article {Kimura-Terai-Yoshida,
    AUTHOR = {Kimura, Kyouko and Terai, Naoki and Yoshida, Ken-ichi},
     TITLE = {Licci squarefree monomial ideals generated in degree two or
              with deviation two},
   JOURNAL = {J. Algebra},
  FJOURNAL = {Journal of Algebra},
    VOLUME = {390},
      YEAR = {2013},
     PAGES = {264--289},
      ISSN = {0021-8693,1090-266X},
   MRCLASS = {13F20 (05C65 13C40)},
  MRNUMBER = {3072123},
MRREVIEWER = {N.\ Mohan Kumar},
       DOI = {10.1016/j.jalgebra.2013.06.001},
       URL = {https://doi.org/10.1016/j.jalgebra.2013.06.001},
}

@article {AlilooeeFaridiBettiNumbers,
    AUTHOR = {Alilooee, Ali and Faridi, Sara},
     TITLE = {Graded {B}etti numbers of path ideals of cycles and lines},
   JOURNAL = {J. Algebra Appl.},
  FJOURNAL = {Journal of Algebra and its Applications},
    VOLUME = {17},
      YEAR = {2018},
    NUMBER = {1},
     PAGES = {1850011, 17},
      ISSN = {0219-4988,1793-6829},
   MRCLASS = {13A15 (05A10 05C25 05C38 13D02 13F55)},
  MRNUMBER = {3741068},
MRREVIEWER = {Margherita\ Barile},
       DOI = {10.1142/S0219498818500111},
       URL = {https://doi.org/10.1142/S0219498818500111},
}

@article {polarization-glicci,
    AUTHOR = {Faridi, Sara and Klein, Patricia and Rajchgot, Jenna and
              Seceleanu, Alexandra},
     TITLE = {Polarization and {G}orenstein liaison},
   JOURNAL = {J. Lond. Math. Soc. (2)},
  FJOURNAL = {Journal of the London Mathematical Society. Second Series},
    VOLUME = {112},
      YEAR = {2025},
    NUMBER = {6},
     PAGES = {Paper No. e70319},
      ISSN = {0024-6107,1469-7750},
   MRCLASS = {13C40 (13F55 13P10)},
  MRNUMBER = {5001583},
       DOI = {10.1112/jlms.70319},
       URL = {https://doi.org/10.1112/jlms.70319},
}

@article {Huneke_AJM,
    AUTHOR = {Huneke, Craig},
     TITLE = {Linkage and the {K}oszul homology of ideals},
   JOURNAL = {Amer. J. Math.},
  FJOURNAL = {American Journal of Mathematics},
    VOLUME = {104},
      YEAR = {1982},
    NUMBER = {5},
     PAGES = {1043--1062},
      ISSN = {0002-9327,1080-6377},
   MRCLASS = {13H10 (13D25)},
  MRNUMBER = {675309},
MRREVIEWER = {Hamid\ Rahbar-Rochandel},
       DOI = {10.2307/2374083},
       URL = {https://doi.org/10.2307/2374083},
}

@article {Huneke_Inv,
    AUTHOR = {Huneke, Craig},
     TITLE = {Numerical invariants of liaison classes},
   JOURNAL = {Invent. Math.},
  FJOURNAL = {Inventiones Mathematicae},
    VOLUME = {75},
      YEAR = {1984},
    NUMBER = {2},
     PAGES = {301--325},
      ISSN = {0020-9910,1432-1297},
   MRCLASS = {13D03 (13D25 13H10 14M05 14M10)},
  MRNUMBER = {732549},
MRREVIEWER = {J\"urgen\ Herzog},
       DOI = {10.1007/BF01388567},
       URL = {https://doi.org/10.1007/BF01388567},
}

@article {HunekeUlrich_Duke,
    AUTHOR = {Huneke, Craig and Ulrich, Bernd},
     TITLE = {Algebraic linkage},
   JOURNAL = {Duke Math. J.},
  FJOURNAL = {Duke Mathematical Journal},
    VOLUME = {56},
      YEAR = {1988},
    NUMBER = {3},
     PAGES = {415--429},
      ISSN = {0012-7094,1547-7398},
   MRCLASS = {13H10 (13D10 14M05)},
  MRNUMBER = {948528},
MRREVIEWER = {Matthew\ Miller},
       DOI = {10.1215/S0012-7094-88-05618-9},
       URL = {https://doi.org/10.1215/S0012-7094-88-05618-9},
}

@article {Herzog_Crelle,
    AUTHOR = {Herzog, J\"urgen},
     TITLE = {Deformationen von {C}ohen-{M}acaulay {A}lgebren},
   JOURNAL = {J. Reine Angew. Math.},
  FJOURNAL = {Journal f\"ur die Reine und Angewandte Mathematik. [Crelle's
              Journal]},
    VOLUME = {318},
      YEAR = {1980},
     PAGES = {83--105},
      ISSN = {0075-4102,1435-5345},
   MRCLASS = {13D10 (13H10 14B07)},
  MRNUMBER = {579384},
MRREVIEWER = {Martin\ Lindner},
       DOI = {10.1515/crll.1980.318.83},
       URL = {https://doi.org/10.1515/crll.1980.318.83},
}

@article {KustinMiller_Gorenstein,
    AUTHOR = {Kustin, Andrew R. and Miller, Matthew},
     TITLE = {Deformation and linkage of {G}orenstein algebras},
   JOURNAL = {Trans. Amer. Math. Soc.},
  FJOURNAL = {Transactions of the American Mathematical Society},
    VOLUME = {284},
      YEAR = {1984},
    NUMBER = {2},
     PAGES = {501--534},
      ISSN = {0002-9947,1088-6850},
   MRCLASS = {13D10 (13H10 14B07 14M05)},
  MRNUMBER = {743730},
MRREVIEWER = {J\"urgen\ Herzog},
       DOI = {10.2307/1999093},
       URL = {https://doi.org/10.2307/1999093},
}

@article {KustinMiller_tightGorenstein,
    AUTHOR = {Kustin, Andrew R. and Miller, Matthew},
     TITLE = {Tight double linkage of {G}orenstein algebras},
   JOURNAL = {J. Algebra},
  FJOURNAL = {Journal of Algebra},
    VOLUME = {95},
      YEAR = {1985},
    NUMBER = {2},
     PAGES = {384--397},
      ISSN = {0021-8693},
   MRCLASS = {13H10 (13D10)},
  MRNUMBER = {801274},
MRREVIEWER = {Bernd\ Ulrich},
       DOI = {10.1016/0021-8693(85)90110-3},
       URL = {https://doi.org/10.1016/0021-8693(85)90110-3},
}

@article {KustinMillerUlrich_pureRes,
    AUTHOR = {Kustin, Andrew R. and Miller, Matthew and Ulrich, Bernd},
     TITLE = {Linkage theory for algebras with pure resolutions},
   JOURNAL = {J. Algebra},
  FJOURNAL = {Journal of Algebra},
    VOLUME = {102},
      YEAR = {1986},
    NUMBER = {1},
     PAGES = {199--228},
      ISSN = {0021-8693},
   MRCLASS = {13H10 (13D99)},
  MRNUMBER = {853240},
MRREVIEWER = {J\"urgen\ Herzog},
       DOI = {10.1016/0021-8693(86)90137-7},
       URL = {https://doi.org/10.1016/0021-8693(86)90137-7},
}

@article {Rao_liason,
    AUTHOR = {Prabhakar Rao, Aroor},
     TITLE = {Liaison among curves in {${\bf P}\sp{3}$}},
   JOURNAL = {Invent. Math.},
  FJOURNAL = {Inventiones Mathematicae},
    VOLUME = {50},
      YEAR = {1978/79},
    NUMBER = {3},
     PAGES = {205--217},
      ISSN = {0020-9910,1432-1297},
   MRCLASS = {14M07},
  MRNUMBER = {520926},
MRREVIEWER = {G.\ Horrocks},
       DOI = {10.1007/BF01410078},
       URL = {https://doi.org/10.1007/BF01410078},
}

@article {EneRinaldoTerai_liccibinomial,
    AUTHOR = {Ene, Viviana and Rinaldo, Giancarlo and Terai, Naoki},
     TITLE = {Licci binomial edge ideals},
   JOURNAL = {J. Combin. Theory Ser. A},
  FJOURNAL = {Journal of Combinatorial Theory. Series A},
    VOLUME = {175},
      YEAR = {2020},
     PAGES = {105278, 23},
      ISSN = {0097-3165,1096-0899},
   MRCLASS = {05E40 (13A70)},
  MRNUMBER = {4102156},
MRREVIEWER = {Mehrdad\ Nasernejad},
       DOI = {10.1016/j.jcta.2020.105278},
       URL = {https://doi.org/10.1016/j.jcta.2020.105278},
}

@article {NagelRomer_glicci,
    AUTHOR = {Nagel, Uwe and R\"omer, Tim},
     TITLE = {Glicci simplicial complexes},
   JOURNAL = {J. Pure Appl. Algebra},
  FJOURNAL = {Journal of Pure and Applied Algebra},
    VOLUME = {212},
      YEAR = {2008},
    NUMBER = {10},
     PAGES = {2250--2258},
      ISSN = {0022-4049,1873-1376},
   MRCLASS = {13C40 (05E25 13F55)},
  MRNUMBER = {2426505},
MRREVIEWER = {Rahim\ Zaare-Nahandi},
       DOI = {10.1016/j.jpaa.2008.03.005},
       URL = {https://doi.org/10.1016/j.jpaa.2008.03.005},
}

@article {ConcaDeNegri,
    AUTHOR = {Conca, Aldo and De Negri, Emanuela},
     TITLE = {{$M$}-sequences, graph ideals, and ladder ideals of linear
              type},
   JOURNAL = {J. Algebra},
  FJOURNAL = {Journal of Algebra},
    VOLUME = {211},
      YEAR = {1999},
    NUMBER = {2},
     PAGES = {599--624},
      ISSN = {0021-8693,1090-266X},
   MRCLASS = {13C40 (13H10 13P10)},
  MRNUMBER = {1666661},
MRREVIEWER = {Rafael\ H.\ Villarreal},
       DOI = {10.1006/jabr.1998.7740},
       URL = {https://doi.org/10.1006/jabr.1998.7740},
}

@article {Banerjee2015,
    AUTHOR = {Banerjee, Arindam},
     TITLE = {The regularity of powers of edge ideals},
   JOURNAL = {J. Algebraic Combin.},
  FJOURNAL = {Journal of Algebraic Combinatorics. An International Journal},
    VOLUME = {41},
      YEAR = {2015},
    NUMBER = {2},
     PAGES = {303--321},
      ISSN = {0925-9899,1572-9192},
   MRCLASS = {13F20 (05C10 13D02)},
  MRNUMBER = {3306074},
MRREVIEWER = {Adam\ L.\ Van Tuyl},
       DOI = {10.1007/s10801-014-0537-2},
       URL = {https://doi.org/10.1007/s10801-014-0537-2},
}

@article{path-linear-res,
      title={Trees whose path ideals have linear quotients}, 
      author={Trung Chau and Kanoy Kumar Das and Animikha Dutta Dhar and Pranath S Karanth and Aniruda Suswaram},
      year={2025},
      journal={arXiv:2506.06209 [math.AC]}
}

@article {biCM-Crupi,
    AUTHOR = {Crupi, Marilena and Ficarra, Antonino},
     TITLE = {Edge ideals whose all matching powers are
              bi-{C}ohen-{M}acaulay},
   JOURNAL = {Comm. Algebra},
  FJOURNAL = {Communications in Algebra},
    VOLUME = {54},
      YEAR = {2026},
    NUMBER = {2},
     PAGES = {661--668},
      ISSN = {0092-7872,1532-4125},
   MRCLASS = {13C14 (05E40 13C05 13C15)},
  MRNUMBER = {4996679},
       DOI = {10.1080/00927872.2025.2537272},
       URL = {https://doi.org/10.1080/00927872.2025.2537272},
}

@article {biCM-Gunnar,
    AUTHOR = {Fl{\o}ystad, Gunnar and Vatne, Jon Eivind},
     TITLE = {({B}i-){C}ohen-{M}acaulay simplicial complexes and their
              associated coherent sheaves},
   JOURNAL = {Comm. Algebra},
  FJOURNAL = {Communications in Algebra},
    VOLUME = {33},
      YEAR = {2005},
    NUMBER = {9},
     PAGES = {3121--3136},
      ISSN = {0092-7872,1532-4125},
   MRCLASS = {13F55 (14F05)},
  MRNUMBER = {2175383},
MRREVIEWER = {Anargyros\ Katsabekis},
       DOI = {10.1081/AGB-200066131},
       URL = {https://doi.org/10.1081/AGB-200066131},
}

@article {biCM-Herzog,
    AUTHOR = {Herzog, J\"urgen and Rahimi, Ahad},
     TITLE = {Bi-{C}ohen-{M}acaulay graphs},
   JOURNAL = {Electron. J. Combin.},
  FJOURNAL = {Electronic Journal of Combinatorics},
    VOLUME = {23},
      YEAR = {2016},
    NUMBER = {1},
     PAGES = {Paper 1.1, 14},
      ISSN = {1077-8926},
   MRCLASS = {05E40 (13C14 13F20 13F55)},
  MRNUMBER = {3484706},
MRREVIEWER = {Jin\ Guo},
       DOI = {10.37236/5557},
       URL = {https://doi.org/10.37236/5557},
}

@article {Gitler-Valencia,
    AUTHOR = {Gitler, Isidoro and Valencia, Carlos E.},
     TITLE = {Bounds for invariants of edge-rings},
   JOURNAL = {Comm. Algebra},
  FJOURNAL = {Communications in Algebra},
    VOLUME = {33},
      YEAR = {2005},
    NUMBER = {5},
     PAGES = {1603--1616},
      ISSN = {0092-7872,1532-4125},
   MRCLASS = {13A99 (05C69)},
  MRNUMBER = {2149079},
MRREVIEWER = {Ioannis\ Emmanouil},
       DOI = {10.1081/AGB-200061033},
       URL = {https://doi.org/10.1081/AGB-200061033},
}

@article {localization-CM,
    AUTHOR = {Matijevic, Jacob and Roberts, Paul},
     TITLE = {A conjecture of {N}agata on graded {C}ohen-{M}acaulay rings},
   JOURNAL = {J. Math. Kyoto Univ.},
  FJOURNAL = {Journal of Mathematics of Kyoto University},
    VOLUME = {14},
      YEAR = {1974},
     PAGES = {125--128},
      ISSN = {0023-608X},
   MRCLASS = {13E05},
  MRNUMBER = {340243},
MRREVIEWER = {Melvin\ Hochster},
       DOI = {10.1215/kjm/1250523283},
       URL = {https://doi.org/10.1215/kjm/1250523283},
}

\end{document}